\newcommand{\remove}[1]{}
\newtheorem*{rep@theorem}{\rep@title}
\newcommand{\newreptheorem}[2]{%
\newenvironment{rep#1}[1]{%
 \def\rep@title{#2 \ref{##1}}%
 \begin{rep@theorem}}%
 {\end{rep@theorem}}}
\newtheorem{thm}{Theorem}[section]
\newtheorem{claim}[thm]{Claim}
\newtheorem{lem}[thm]{Lemma}
\newtheorem{define}[thm]{Definition}
\newtheorem{conjecture}[thm]{Conjecture}
\newtheorem{fact}[thm]{Fact}
\def\F{{\mathbb{F}}}
\def\D{{\mathbb{D}}}
\def\Q{{\mathbb{Q}}}
\def\Z{{\mathbb{Z}}}
\def\N{{\mathbb{N}}}
\def\R{{\mathbb{R}}}
\def\C{{\mathbb{C}}}
\def\P{{\mathbb{P}}}
\def\cS{{\mathcal S}}
\def\E{{\mathbb E}} 
\def\cN{{\mathcal{N}}}
\def\mweight{\textsf{mweight}}
\def\hZ{\hat{{\mathbb Z}}}
\def\Ind{{\mathbf{1}}}
\def\Gr{\operatorname{Gr}}
\begin{document}

\title{$(n,k)$-Besicovitch sets do not exist in $\Z_p^n$ and $\hZ^n$ for $k\ge 2$}

\author{Manik Dhar\thanks{Department of Mathematics, Massachusetts Institute of Techonology. Email: \texttt{dmanik@mit.edu}. Part of this work was done while this author was a graduate student at the Department of Computer Science, Princeton University where his research was supported by NSF grant DMS-1953807.}}
\date{}

\maketitle

\begin{abstract}
Besicovitch showed that a compact set in $\R^n$ which contains a unit line segment in every direction can have measure $0$. These constructions also work over other metric spaces like the $p$-adics and profinite integers. It is conjectured that it is impossible to construct sets with measure $0$ which contain a unit $2$-disk in every direction in $\R^n$. We prove that over the $p$-adics and profinite integers any set which contains a 2-flat in every direction must have positive measure. The main ingredients are maximal Kakeya estimates for $(\Z/N\Z)^n$ proven in \cite{dhar2022maximal} and adapting Fourier analytic arguments by Oberlin~\cite{Oberlin2005BoundsFK}. In general, we prove $L^{n-1}$ to $L^{n-1}$ estimates for the maximal operator corresponding to $2$-flats. 
\end{abstract} 

%\pagenumbering{arabic}

\section{Introduction}
A Kakeya set $K\subseteq \R^n$ over the reals is a compact set containing a unit line segment in every direction. Besicovitch~\cite{BesicovitchOnKP} showed that Kakeya sets can have measure zero. We can make the following general definition.

\begin{define}[Besicovitch Sets over $\R^n$]
An $(n,k)$-Besicovitch set $K$ is a compact set in $\R^n$ containing a translate of every $k$-dimensional unit disk such that $K$ has Lebesgue measure zero.
\end{define}

In other words we know that $(n,1)$-Besicovitch sets exist. The main conjecture about them is that $(n,k)$-Besicovitch sets do not exist for $k\ge 2$. So far, we only know this result for $k\ge \log_2(n)$ due to a work of Bourgain~\cite{Bourgain1991BesicovitchTM}. A simpler proof for this result was given by Oberlin~\cite{Oberlin2005BoundsFK} which also proves maximal function estimates for $\log_2 n$ dimensional flats / affine subspaces.

We can also define Besicovitch sets over the $p$-adics $\Z_p^n$ and profinite integers $\hZ^n$.

\begin{define}[Besicovitch Sets over $\Z_p^n$ and $\hZ^n$]
An $(n,k)$-Besicovitch set $K$ over $\Z_p^n$ {\em (or $\hZ^n$)} is a set containing a translate of every $k$-dimensional subspace such that $K$ has  Haar measure zero.
\end{define}
The definition of the set of $k$-dimensional subspaces $\Gr(\F^n,k)$ in these settings is slightly delicate (especially for $\hZ$ which is not a domain). We will give a precise definition later.

It is possible to construct $(n,1)$-Besicovitch sets over $\Z_p^n$ and $\hZ^n$~\cite{hickman2018fourier}. We show that such constructions are impossible for $k\ge 2$ and the rings $\Z_p$ and $\hZ$.

\begin{thm}\label{thm:noBesi}
$(n,k)$-Besicovitch sets do not exist for $k\ge 2$ in $\Z_p^n$ and $\hZ^n$.
\end{thm}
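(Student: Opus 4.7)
The plan is to reduce the question to a quantitative statement on the finite rings $(\Z/N\Z)^n$, and then to establish an $L^{n-1}\to L^{n-1}$ maximal inequality for averages over $2$-flats in these rings. First observe that the case $k\ge 2$ follows from the case $k=2$: every $2$-dimensional subspace extends to a $k$-dimensional subspace, so any $(n,k)$-Besicovitch set with $k\ge 2$ is in particular an $(n,2)$-Besicovitch set. We may therefore assume $k=2$ throughout.

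Next, using the standard approximation of Haar measure on $\Z_p^n$ (resp.\ $\hZ^n$) by the uniform measure on its finite quotients $(\Z/p^m\Z)^n$ (resp.\ $(\Z/N\Z)^n$), a Haar-null $(n,2)$-Besicovitch set projects to subsets $K_N\subseteq (\Z/N\Z)^n$ of density tending to $0$ as $N\to\infty$, each still containing a translate of every $2$-dimensional affine subspace. The goal then reduces to a uniform density lower bound for such $K_N$. The main step is the maximal inequality: there exists $C=C(n)$, independent of $N$, such that for all $f:(\Z/N\Z)^n\to\R$,
\[
\|M_2 f\|_{n-1} \le C\,\|f\|_{n-1},
\]
where $M_2 f(x)=\sup_V (1/|V|)\sum_{v\in V}f(x+v)$ and the supremum is over $2$-dimensional affine subspaces $V$ through $x$. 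Applied to $f=\mathbf{1}_{K_N}$ the hypothesis forces $M_2 f\equiv 1$, so $|K_N|/N^n\ge c(n)>0$, contradicting $|K_N|/N^n\to 0$ and proving the theorem.

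For the maximal inequality I plan to follow the Fourier-analytic strategy of Oberlin~\cite{Oberlin2005BoundsFK}. The indicator measure of a $2$-flat has an explicit Fourier transform supported on the annihilator of its direction, and averaging over the choice of direction relates the maximal operator to incidences between lines and $2$-planes on the Fourier side. The input is the maximal Kakeya inequality for $1$-flats on $(\Z/N\Z)^n$ from~\cite{dhar2022maximal}: one decomposes a $2$-flat into its family of parallel lines in a distinguished sub-direction, applies the line maximal estimate to the inner average, and uses a H\"older/Plancherel argument to gain the extra integrability that moves the exponent from $n$ for $1$-flats down to $n-1$ for $2$-flats. The care needed is that $\Z/N\Z$ is not a field, so $\Gr(\F^n,k)$ must be defined via primitive (unimodular) rank-$k$ matrices and the Fourier calculus must be done uniformly in composite $N$, which is the setting of~\cite{dhar2022maximal}.

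The main obstacle I anticipate is proving the $L^{n-1}\to L^{n-1}$ inequality with a constant uniform in $N$. Oberlin's argument is over $\R^n$ and leans on the continuous Grassmannian and its rotational symmetries; transferring it to $(\Z/N\Z)^n$ requires a suitable substitute for the Haar measure on $\Gr(\F^n,2)$ that behaves well when $N$ has many prime factors, and requires the line-maximal estimate of~\cite{dhar2022maximal} in a form sharp enough to serve as the induction base. A secondary technical issue is the profinite approximation, particularly over $\hZ^n$: one must verify that the set of $2$-subspaces in the profinite sense is really captured by $2$-subspaces at finite level, so that a Haar-null Besicovitch set does in fact yield arbitrarily sparse $K_N$ that violate the finite maximal inequality.
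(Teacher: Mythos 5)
Your proposal assembles the right ingredients --- the reduction from $k\ge 2$ to $k=2$, the passage from $2$-flats to lines via projection along a distinguished direction (the X-ray transform identity $\cN^2 f(U)=\cN^1 f_u(w)$ in the quotient $Q_u\cong \F^{n-1}$), Oberlin's Fourier scheme, and the maximal Kakeya bound of \cite{dhar2022maximal} as the base input --- but it has a genuine gap at precisely the point you yourself flag as ``the main obstacle.'' You need the $L^{n-1}\to L^{n-1}$ bound for $M_2$ with a constant uniform in $N$, yet the line-maximal estimate of \cite{dhar2022maximal} (Theorem~\ref{thm-maxEst}) carries a constant $C_{N,n}^{-1}\approx N^{Cn\log N/\log\log N}$ that blows up with $N$ --- and it must blow up, since measure-zero $(n,1)$-Besicovitch sets exist, so no uniform single-scale $\cN^1$ inequality is available to serve as your ``induction base.'' Your sketch offers no mechanism for absorbing this loss. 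The mechanism in the paper is a Littlewood--Paley decomposition $f=\sum_i f_i$ into frequency shells $M_i\le v(a)<M_{i+1}$: each $f_i$ is locally constant at scale $M_{i+1}$, the Kakeya bound is applied at that single scale with its bad constant, and the loss is beaten by Lemma~\ref{lem:freqBound}, which shows that the direction-averaged $L^{n-1}$ norm of the X-ray transform of the shell $f_i$ gains a factor $|\P(\Z/M_i\Z)^{n-2}|/|\P(\Z/M_i\Z)^{n-1}|\le 1/M_i$; the series $\sum_i M_{i+1}^{C\log M_{i+1}/\log\log M_{i+1}}/M_i^{1/(n-1)}$ then converges for $M_i=p^i$ or $(i+1)!$. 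Without the multi-scale decomposition and this quantitative race between the frequency decay and the quasi-polynomial Kakeya loss, the ``H\"older/Plancherel argument to gain extra integrability'' you invoke does not close the argument.

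A secondary, fixable inaccuracy: a Haar-null set need not project to sets $K_N\subseteq(\Z/N\Z)^n$ of density tending to $0$ --- a countable dense null set reduces onto all of $(\Z/N\Z)^n$ for every $N$. One needs outer regularity (cover the null set by a union of balls at scale $N$ of total measure $<\epsilon$ that still contains all the $2$-flats), or, more simply, one applies the maximal inequality directly over $\Z_p^n$ or $\hZ^n$ to $f=\Ind_S$: containment of a translate of every $2$-subspace forces $\cN^2\Ind_S\equiv 1$, whence $\mu(S)\ge C_n>0$ by Theorem~\ref{thm:maxNorm}, which is how the paper concludes.
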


A key related problem is the Kakeya conjecture, which states that compact sets containing a unit line segment in every direction have Minkowski/Hausdorff dimension $n$. This problem is open over the reals for $n>2$.

Wolff~\cite{Wolff99} introduced this problem over finite fields. Let $\F_q$ be the finite field with $q$ elements. A set $S\subseteq \F_q^n$ is called a Kakeya set if it contains a line in every direction. Wolff conjectured that any Kakeya set $S$ in $\F_q^n$ must have size at least $C_n q^n$ where $C_n>0$ only depends on $n$.

The finite field Kakeya conjecture was resolved by Dvir~\cite{Dvir08} using the polynomial method. Subsequent works~\cite{DKSS13,BukhChao21} improved these polynomial method techniques to show that these sets have size at least $q^n/2^{n-1}$ which is known to be tight. The Kakeya conjecture was also posed for the $p$-adics and profinite rationals~\cite{ellenberg2010kakeya,hickman2018fourier}. These problems are linked to studying Kakeya sets over $(\Z/p^\ell\Z)^n$ and $(\Z/N\Z)^n$. Starting from a solution for $N$ square-free~\cite{dhar2021proof} and the breakthrough solution for $N$ prime power~\cite{arsovskiNew} this problem was resolved for general $N$~\cite{DharGeneral}.

We can alternatively consider sets which contain $k$-flats in every direction. Such sets over $\F_q^n$ are known to have size $(1-o(1))q^n$ using suitable generalizations of the polynomial method~\cite{ellenberg2010kakeya,KLSS2011}. If we consider the case of subspaces having large intersections instead of containment then polynomial method arguments do not work and this problem was completely resolved for large fields in \cite{DDL-2,dhar2022linear}. 

The argument in \cite{Bourgain1991BesicovitchTM,Oberlin2005BoundsFK} and \cite{DDL-2,dhar2022linear} both linearly project the set to one lower dimension and then use known Kakeya bounds in the lower dimensional space. The pullback of $k-1$ dimensional planes under the projection are $k$ dimensional planes. This allows us to take estimates for intersections with $k-1$ dimensional planes in the projection to give estimates for intersections with $k$ dimensional planes. In the papers \cite{Bourgain1991BesicovitchTM,Oberlin2005BoundsFK} Fourier Analytic arguments are used and \cite{DDL-2,dhar2022linear} uses simple incidence and/or probabilistic arguments to get stronger bounds for $k$ planes assuming bounds for $k-1$ planes. As finite field Kakeya bounds for lines are known this allows \cite{DDL-2,dhar2022linear} to get bounds for $k$-planes for $k\ge 2$.

If the arguments of \cite{DDL-2,dhar2022linear} could naively generalize to the setting of $\Z_p^n$ (and $\hZ^n$ or $\R$) we would be able to show that sets in $\Z_p^n$ which intersect with $k$ dimensional planes in every direction in normalized Haar measure (in $\Z_p^k$) $\delta^k$ would have normalized Haar Measure (in $\Z_p^n$) $\delta^k$. It is easy to see that this is false by considering a ball of radius $\delta$ in $\Z_p^n$. We instead adapt the arguments of \cite{Bourgain1991BesicovitchTM,Oberlin2005BoundsFK} to prove the following quantitative result.

\begin{thm}\label{thm:quant}
$\delta\in (0,1]$. Given a set $S$ in $\Z_p^n$ (or $\hZ^n$) such that for every $2$-dimensional subspace $U$, a translate of it intersects with $S$ in normalized Haar measure at least $\delta^2$ then $S$ has normalized Haar measure at least $C_{n}\delta^{2(n-1)}$, where $C_n$ is a constant only depending on $n$.
\end{thm}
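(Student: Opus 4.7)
My plan is to reduce to a finite quotient ring and then apply the maximal Kakeya estimate for lines in a lower-dimensional ambient space via a single well-chosen linear projection. First, since $\Z_p = \varprojlim_\ell \Z/p^\ell\Z$ with compatible normalized Haar measure, a standard approximation lets me replace $S \subseteq \Z_p^n$ by its image in $(\Z/p^\ell\Z)^n$ for $\ell$ sufficiently large relative to $\delta$; the $2$-flat hypothesis descends with essentially the same constant. The profinite case $\hZ^n$ reduces in the same way to a suitable single $(\Z/N\Z)^n$. It therefore suffices to prove: if $S \subseteq (\Z/N\Z)^n$ has the property that every $2$-dimensional subspace $U$ admits a translate with $|(U+c) \cap S| \ge \delta^2 N^2$, then $|S| \ge C_n \delta^{2(n-1)} N^n$.

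Next, fix the coordinate direction $\omega = e_n$ and let $\pi : (\Z/N\Z)^n \to (\Z/N\Z)^{n-1}$ be the projection onto the first $n-1$ coordinates. Define the fiber-average $\bar S(y) = \frac{1}{N}\left|\{t \in \Z/N\Z : (y,t) \in S\}\right|$, so that $\bar S : (\Z/N\Z)^{n-1} \to [0,1]$ and $\E_y \bar S(y) = |S|/N^n$. For every line direction $\eta$ in the Grassmannian $\Gr((\Z/N\Z)^{n-1}, 1)$, lifting to any $\tilde\eta$ yields a $2$-dimensional subspace $\text{span}(e_n, \tilde\eta) \in \Gr((\Z/N\Z)^n, 2)$, and for any translate $U+c$ of this $2$-flat one has the key identity
\begin{equation*}
\frac{|(U+c) \cap S|}{N^2} \;=\; \frac{1}{N}\sum_{y \in \pi(c) + \text{span}(\eta)} \bar S(y).
\end{equation*}
The hypothesis therefore guarantees, for every line direction $\eta$, that some translate of a line of direction $\eta$ has $\bar S$-average at least $\delta^2$; equivalently, the Kakeya maximal function $K^*\bar S(\eta) \ge \delta^2$ for every $\eta$. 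Applying the $L^{n-1}\to L^{n-1}$ maximal Kakeya estimate for lines in $(\Z/N\Z)^{n-1}$ from \cite{dhar2022maximal} and using that $0 \le \bar S \le 1$ implies $\bar S^{n-1} \le \bar S$ pointwise,
\begin{equation*}
\delta^{2(n-1)} \;\le\; \E_\eta |K^*\bar S(\eta)|^{n-1} \;\le\; C_n\, \E_y |\bar S(y)|^{n-1} \;\le\; C_n \,\E_y \bar S(y) \;=\; C_n\, \frac{|S|}{N^n},
\end{equation*}
yielding the desired bound $|S| \ge C_n^{-1}\delta^{2(n-1)} N^n$.

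The main obstacle is making the reduction step rigorous, especially for $\hZ^n$. Since $\hZ \cong \prod_p \Z_p$ is not a domain, a $2$-dimensional ``direction'' in $\hZ^n$ does not correspond cleanly to a $2$-dimensional direction in a single $(\Z/N\Z)^n$, and one must verify that the paper's forthcoming definition of $\Gr(\hZ^n,2)$ survives the approximation and yields enough $2$-flats in the finite quotient to invoke the maximal Kakeya bound pointwise over $\eta$. A secondary concern is that the quoted Kakeya estimate must truly be on the $L^{n-1}\to L^{n-1}$ scale with a dimension-only constant: a polynomial-in-$N$ loss would invalidate taking $\ell \to \infty$, whereas a logarithmic loss could be absorbed into $C_n$. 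Both points are presumably handled by the careful Grassmannian formalism developed later in the paper together with the precise form of the maximal inequality in \cite{dhar2022maximal}.
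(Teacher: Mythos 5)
Your core geometric idea---project along a fixed line so that $2$-flats through it become lines in an $(n-1)$-dimensional quotient, then invoke the $\Z/N\Z$ maximal Kakeya bound---is indeed the skeleton of the paper's argument. But the reduction to a \emph{single} finite quotient $(\Z/p^\ell\Z)^n$ is where the proof breaks, and the issue is not merely technical. You have two candidate discretizations of $S$ at scale $p^{-\ell}$: the image $S \bmod p^\ell$ (the set of balls meeting $S$), or the density function $\bar S_\ell(y)=p^{\ell n}\mu(S\cap B_{p^{-\ell}}(y))$. With the image, the $2$-flat hypothesis does descend, but the conclusion you obtain is a lower bound on the number of balls meeting $S$, which says nothing about $\mu(S)$ (a candidate measure-zero Besicovitch-type set can meet every ball at every finite scale; ruling this out is precisely the content of the theorem, so you cannot pass to the limit $\ell\to\infty$). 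With the density function, $\E_y\bar S_\ell(y)=\mu(S)$ as you need, but now the hypothesis does \emph{not} descend: a set concentrated in a thin tube around a $2$-flat has full intersection with the flat yet negligible ambient density in every ball it passes through, so $\mu_U((U+c)\cap S)\ge\delta^2$ gives no lower bound on the averaged $\bar S_\ell$ along the discretized flat. Either way the single-scale argument loses either the hypothesis or the conclusion. The paper's Littlewood--Paley decomposition $f=\sum_i f_i$ is exactly the device that handles all scales simultaneously: each piece $f_i$ \emph{is} locally constant at scale $M_{i+1}$, so the finite-quotient maximal bound applies to it legitimately.

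Your ``secondary concern'' about the constant is also fatal to the single-scale plan, not absorbable. The bound of Theorem~\ref{thm-maxEst} carries a constant $C_{N,n}^{-1}$ that is unbounded in $N$ (subpolynomial, but growing), and in your reduction $N$ must grow as $\delta\to 0$, so no constant $C_n$ depending only on $n$ survives. The paper defeats this loss by averaging over \emph{all} projection directions $u\in\P\F^{n-1}$ rather than fixing $u=e_n$: Lemma~\ref{lem:freqBound} shows that the $u$-averaged $L^{n-1}$ mass of the $i$-th Littlewood--Paley piece of the X-ray transform decays like $|\P(\Z/M_i\Z)^{n-2}|/|\P(\Z/M_i\Z)^{n-1}|\le 1/M_i$, and it is the convergence of $\sum_i M_{i+1}^{o(1)}/M_i^{1/(n-1)}$ that yields a finite $C_n$. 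With a single fixed projection direction there is no source of decay against which to trade the $N$-dependent loss. So you need both missing ingredients---the multiscale decomposition and the Fourier-analytic averaging over projection directions---to close the argument.
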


Over the reals it has been conjectured that the correct measure lower bound is $\delta^n$ (which would mean the radius $\delta$ ball is a tight example). We expect the same to hold over the $p$-adics and profinite integers. The general conjecture is the following.

\begin{conjecture}
Given a set $S$ in $\Z_p^n$ (or $\hZ^n$) such that for every $k$-dimensional subspace $U$, a translate of it intersects with $S$ in normalized Haar measure at least $\delta^k$ then $S$ has normalized Haar measure at least $C_{n}\delta^{n}$, where $C_n$ is a constant only depending on $n$.
\end{conjecture}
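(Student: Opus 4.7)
The plan is to reduce the conjecture to a sharp $L^{n/k}$ maximal function estimate for the $k$-flat averaging operator, in exact analogy with the derivation of Theorem~\ref{thm:quant}. Let $M^k$ denote the $k$-flat maximal operator sending $f\colon\Z_p^n\to\R_{\ge 0}$ to $(M^kf)(U)=\sup_{x\in\Z_p^n}\int f\,d\mu_{x+U}$, where $\mu_{x+U}$ is the normalized Haar measure on the affine $k$-plane $x+U$ and $U$ ranges over $k$-dimensional $\Z_p$-subspaces. Any bound of the form $\|M^kf\|_{L^{n/k}(\Gr)}\le C_n\|f\|_{L^{n/k}(\Z_p^n)}$ immediately yields the conjecture: applied to $f=\mathbf{1}_S$, the hypothesis $M^k\mathbf{1}_S\ge\delta^k$ forces $\delta^k\le C_n|S|^{k/n}$, hence $|S|\ge C_n^{-n/k}\delta^n$. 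The Haar ball of measure $\delta^n$ intersects every $k$-flat in measure exactly $\delta^k$, so the exponent $n/k$ is sharp, and the same reduction works verbatim over $\hZ^n$.

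The natural attack is induction on $k$. The base case $k=1$ is the $p$-adic and profinite Kakeya maximal conjecture, which follows from the $(\Z/N\Z)^n$ maximal Kakeya bounds of \cite{dhar2022maximal} by approximating $\Z_p^n$ and $\hZ^n$ by their finite quotients and passing to the inverse limit. For the inductive step I would adapt the projection argument underlying Theorem~\ref{thm:quant}. Concretely, for any $\Z_p$-line $L\subset\Z_p^n$, let $g_L$ be the fiber-average of $f$ along $L$, viewed as a function on $\Z_p^n/L\cong\Z_p^{n-1}$; for every $k$-flat $U\supset L$ one then has the pointwise identity $(M^kf)(U)=(M^{k-1}g_L)(U/L)$. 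Decomposing $\Gr_k(\Z_p^n)$ by the choice of $L\subset U$, taking $L^p$ norms, applying the inductive bound on $M^{k-1}$, and using Jensen to dominate $\|g_L\|_{L^p}$ by $\|f\|_{L^p}$ then produces an $L^p$ estimate for $M^k$ on $\Z_p^n$ at the exponent $p=(n-1)/(k-1)$ that drives the induction. This argument in fact reproves Theorem~\ref{thm:quant} and extends it to all $k\ge 2$ at the same non-sharp exponent.

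The main obstacle is that this naive induction preserves the exponent $(n-1)/(k-1)$ rather than improving it to the target $n/k$, and for $k<n$ the gap $n/k<(n-1)/(k-1)$ is strict. Pure interpolation against the trivial endpoint $L^\infty$ fails because $n/k$ lies outside the reachable range, and this is precisely the mismatch that restricts Bourgain~\cite{Bourgain1991BesicovitchTM} and Oberlin~\cite{Oberlin2005BoundsFK} to $k\ge\log_2 n$ over $\R^n$. A promising avenue in the $p$-adic and profinite setting is to exploit the cleaner ultrametric Fourier structure --- $\Q_p/\Z_p$ is discrete, balls are genuine subgroups, and generic $\Z_p$-linear projections are honest surjections with coset fibers --- by averaging the inductive estimate simultaneously over a large family of projections $\Z_p^n\to\Z_p^k$, hoping to cash the extra averaging in for the Fourier-analytic gain needed to reach the sharp endpoint. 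A secondary issue is defining the Haar measure on $\Gr(\hZ^n,k)$ when $\hZ$ is not a domain; the cleanest route is to establish a uniform-in-$N$ maximal inequality on $(\Z/N\Z)^n$ and pass to the inverse limit.
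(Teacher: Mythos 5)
The statement you are addressing is stated in the paper as an open conjecture; the paper does not prove it, and neither do you. Your own second paragraph concedes the decisive point: the inductive projection argument only reaches the exponent $(n-1)/(k-1)$, not the target $n/k$, and the ``promising avenue'' you sketch for closing that gap (averaging over many projections to extract a Fourier-analytic gain) is a hope, not an argument. At best the proposal reduces the conjecture to an unproved sharp maximal inequality, which is exactly where the paper also leaves matters.

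More importantly, even the non-sharp induction you describe fails at its base case. You assert that the $k=1$ maximal Kakeya bound over $\Z_p^n$ ``follows from the $(\Z/N\Z)^n$ maximal Kakeya bounds of \cite{dhar2022maximal} by approximating \dots and passing to the inverse limit.'' This is false: the paper explicitly notes that measure-zero $(n,1)$-Besicovitch sets exist over $\Z_p^n$ and $\hZ^n$, so no nontrivial $L^p\to L^p$ bound for $\cN^1$ can hold there, and correspondingly the constants $C_{N,n}=N^{-Cn\log(N)/\log\log N}$ in Theorem~\ref{thm-maxEst} degenerate as $N\to\infty$, which blocks any inverse-limit argument. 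The paper's actual mechanism (for $k=2$) is to perform a Littlewood--Paley decomposition $f=\sum_i f_i$ with each $f_i$ locally constant at scale $M_{i+1}$, apply the finite-quotient maximal bound to each piece (paying the constant $C_{M_{i+1},n-1}^{-1}$), and then use the X-ray transform Fourier estimate of Lemma~\ref{lem:freqBound} to gain a factor of roughly $1/M_i$ per piece, which beats the sub-polynomial blow-up of the constants when summed over scales. Without this decomposition your inductive step has nothing valid to induct from, so the proposal as written neither proves the conjecture nor reproves Theorem~\ref{thm:quant}.
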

The arguments here can also be used to prove bounds for $k$-flats. As we do not get strong enough bounds to prove the conjecture, for simplicity, we stick to the $2$-flats case.

\subsection{Maximal operator bounds for flats}

Let $\Gr(F^n,k)$ be the set of $k$ dimensional subspaces of $\F^n$ ($\F$ in this paper would be $\Z_p$, $\hZ$, and $\Z/N\Z$ and we will give precise definitions later). We define $\cN^k$ as an operator which maps function $f:\F^n\rightarrow \C$ to functions $\Gr(\F^n,k)\rightarrow \C$ as follows,
$$\cN^k f(U)= \sup\limits_{a\in \F^n} \int\limits_{x\in U} |f(a+x)| \,d\mu, $$
where $U\in \Gr(\F^n,k)$ and $\mu$ is the normalized Haar measure over $U\cong \F^{n-1}$ (for $\F=\Z/N\Z$ the Haar measure is just the normalized counting measure). $\cN^k$ for a given subspace $U$ gives you the largest `intersection' a shift of $U$ can have with $f$. 

Our main theorem, which implies Theorems~\ref{thm:noBesi} and \ref{thm:quant}, is the following.

\begin{thm}[$n-1$ to $n-1$ norm bound for the maximal operator $\cN^2$]\label{thm:maxNorm}
Given a function $f:\F^n\rightarrow \C$ where $\F=\Z_p$ or $\hZ$ we have the following bound,
$$C_n\int\limits_{U\in \Gr(\F^n,2)} |\cN^2 f(U)|^{n-1} \,d\nu \le  \int\limits_{x\in \F^n} |f(x)|^{n-1} \,d\mu, $$
where $C_n$ is a constant depending on $n$ and $\F$, and $\mu$ and $\nu$ are normalized Haar measure over $\F^n$ and $\Gr(\F^n,2)$ respectively.
\end{thm}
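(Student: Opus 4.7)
The plan is to adapt Oberlin's projection-to-lower-dimension trick and combine it with the Kakeya maximal estimate for lines in $\F^{n-1}$ established in \cite{dhar2022maximal}. First, I would approximate $f$ by a function constant on the cosets of $p^\ell\Z_p^n$ (resp.\ of $N\hZ^n$), so that both sides of the desired inequality are computed on the finite quotient $(\Z/p^\ell\Z)^n$ (resp.\ $(\Z/N\Z)^n$), where $\Gr(\F^n,k)$ is a finite set and every Haar integral reduces to a normalised sum. Taking $\ell,N\to\infty$ would then recover the $\Z_p$ and $\hZ$ statements, provided the constant is uniform.

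For each direction $v\in \Gr(\F^n,1)$ consider the quotient $\pi_v\colon \F^n\to\F^n/\langle v\rangle\cong\F^{n-1}$ and define the fiber average
\[
\bar h_v(x) \;=\; \int_{t\in \F}|f(x_0+tv)|\,dt,\qquad x\in\F^{n-1},
\]
where $x_0$ is any lift of $x$. For a $2$-flat $U=\langle v,w\rangle$ containing $v$ and $a\in\F^n$, writing $u=tv+sw$,
\[
\int_{u\in U}|f(a+u)|\,d\mu \;=\; \int_s\int_t|f(a+tv+sw)|\,dt\,ds \;=\; \int_s\bar h_v\bigl(\pi_v(a)+s\,\pi_v(w)\bigr)\,ds.
\]
Taking the supremum over $a\in\F^n$ and noting that $\pi_v(a)$ ranges over all of $\F^{n-1}$ gives the pointwise comparison
\[
\cN^2 f(U) \;\le\; \cN^1\bar h_v\bigl(\pi_v(U)\bigr),
\]
where $\pi_v(U)$ is a line in $\F^{n-1}$ whenever $v\subset U$.

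Next, I would apply Fubini on the incidence $\{(v,U):v\subset U\}\subset \Gr(\F^n,1)\times\Gr(\F^n,2)$. Since each $2$-flat contains a $\P^1$-worth of directions with uniform measure, there is a constant $c_n$ with
\[
\int_{\Gr(\F^n,2)}|\cN^2 f(U)|^{n-1}\,d\nu(U) \;=\; c_n\int_{\Gr(\F^n,1)}\int_{U\supset v}|\cN^2 f(U)|^{n-1}\,dU\,dv.
\]
Plugging in the pointwise bound and then invoking the $L^{n-1}\to L^{n-1}$ Kakeya maximal estimate for lines in $\F^{n-1}$ from \cite{dhar2022maximal} yields
\[
\int_{U\supset v}|\cN^2 f(U)|^{n-1}\,dU \;\le\; C_{n-1}\int_{\F^{n-1}}|\bar h_v(x)|^{n-1}\,dx.
\]
Jensen's inequality on the fiber average (valid since $\F$ has total mass $1$) gives $|\bar h_v(x)|^{n-1}\le\int_t|f(x_0+tv)|^{n-1}\,dt$, and Fubini over $(x,t)$ identifies the right side with $\int_{\F^n}|f|^{n-1}\,d\mu$. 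A final integration over $v\in\Gr(\F^n,1)$ contributes only a finite factor and delivers the theorem.

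The step I expect to be the main obstacle is the measure-theoretic bookkeeping over $\hZ$: because $\hZ$ is not a domain, a usable definition of $\Gr(\hZ^n,k)$ and its canonical Haar measure must be pinned down carefully, and one must verify that the incidence constant $c_n$ and the Kakeya constant $C_{n-1}$ stay uniform under the inverse-limit structure $\hZ=\varprojlim \Z/N\Z$. Reducing the problem via CRT to uniform estimates on $(\Z/N\Z)^{n}$ and $(\Z/N\Z)^{n-1}$, which \cite{dhar2022maximal} provides, should close this gap; the same reduction handles the $\Z_p$ case.
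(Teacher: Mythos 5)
Your reduction to a single application of the line--Kakeya maximal estimate over $\F^{n-1}$ has a fatal gap at exactly the step you flag as ``the main obstacle'': the constant in the maximal Kakeya bound of \cite{dhar2022maximal} over $(\Z/N\Z)^{n-1}$ is \emph{not} uniform in $N$ --- it is $C_{N,n-1}=N^{-C(n-1)\log N/\log\log N}$, which degenerates as $N\to\infty$. No uniform $L^{n-1}\to L^{n-1}$ bound for $\cN^1$ over $\Z_p^{n-1}$ (or $\hZ^{n-1}$) can exist, because $(n-1,1)$-Besicovitch sets of measure zero do exist there: taking $f$ to be the indicator of a small neighbourhood of such a set gives $\cN^1 f\equiv 1$ while $\|f\|_{n-1}$ is arbitrarily small. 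So your limiting argument ``approximate by a locally constant function, apply the finite estimate, let $N\to\infty$'' cannot close, and your Jensen step $\|\bar h_v\|_{n-1}^{n-1}\le\|f\|_{n-1}^{n-1}$ provides no decay to compensate for the blow-up of $1/C_{N,n-1}$.

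The paper's proof is built precisely to get around this. It performs a Littlewood--Paley decomposition $f=\sum_i f_i$, where $f_i$ has Fourier support at scales $M_i\le v(a)<M_{i+1}$ and hence is genuinely locally constant at scale $M_{i+1}$, so the finite-$N$ maximal estimate applies to $f_i$ with the (bad but only quasi-polynomially bad) constant $C_{M_{i+1},n-1}$. The missing ingredient in your argument is the Fourier-analytic gain (Lemma~\ref{lem:freqBound}, proved via Plancherel plus Riesz--Thorin interpolation): averaged over directions $u$, the X-ray transform of the frequency-localized piece satisfies
$$\iint f_{i,u}^{\,n-1}\,d\nu\,d\mu\ \le\ \frac{|\P(\Z/M_i\Z)^{n-2}|}{|\P(\Z/M_i\Z)^{n-1}|}\int f^{n-1}\,d\mu\ \lesssim\ \frac{1}{M_i}\int f^{n-1}\,d\mu,$$
and this $1/M_i$ decay beats the quasi-polynomial loss $1/C_{M_{i+1},n-1}$, making the sum over scales converge. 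Your projection identity $\cN^2 f(U)=\cN^1 f_u(\pi_u(U))$ and the Fubini over the incidence $\{(v,U):v\subset U\}$ do match the paper's equations \eqref{eq:projMaxPlane2Line} and \eqref{eq:divisorMaxReduction}, but without the decomposition into frequency shells and the averaged $L^2$-interpolated decay, the argument does not go through.
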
 
The conjecture in this setting is to have $n/2$ to $n$ norm bounds ($n/k$ to $n$ norm bounds for $\cN^k$).

\begin{conjecture}[$n/k$ to $n$ norm bound for the maximal operator $\cN^k$]
Let $k\ge 2$. Given a function $f:\F^n\rightarrow \C$ where $\F=\Z_p$ or $\hZ$ we have the following bound,
$$C_n\int\limits_{U\in \Gr(\F^n,k)} |\cN^k f(U)|^{n} \,d\nu \le  \left(\int_{x\in \F^n} |f(x)|^{n/k} \,d\mu\right)^{k}, $$
where $C_n$ is a constant depending on $n$ and $\F$, and $\mu$ and $\nu$ are normalized Haar measure over $\F^n$ and $\Gr(\F^n,k)$ respectively.
\end{conjecture}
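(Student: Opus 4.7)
The plan is to prove the conjecture by induction on $k$, extending the projection method behind Theorem~\ref{thm:maxNorm} (itself an adaptation of~\cite{Bourgain1991BesicovitchTM,Oberlin2005BoundsFK}). For each $L\in\Gr(\F^n,1)$, fix a projection $\pi_L:\F^n\to\F^n/L\cong\F^{n-1}$ and define the fiber average $g_L(y)=\int_{L} f(y^\ast+z)\,d\mu_L(z)$, where $y^\ast$ is any lift of $y$. A direct Fubini calculation gives the pointwise domination
$$\cN^k f(U)\le \cN^{k-1} g_L(\pi_L(U))\qquad\text{whenever } L\subseteq U.$$
Parametrizing $\Gr(\F^n,k)$ by pairs $(L,V)\in\Gr(\F^n,1)\times\Gr(\F^{n-1},k-1)$ via $U=\pi_L^{-1}(V)$, one would raise this to the $n$th power, integrate in $V$ using the inductive hypothesis at level $(n-1,k-1)$, and then integrate in $L$ using Jensen's inequality $\|g_L\|_p\le\|f\|_p$ to return to a bound in terms of $f$ itself.

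\textbf{Base case and endpoint gap.} The induction would naturally be based at $k=2$. However, Theorem~\ref{thm:maxNorm} only establishes the diagonal $L^{n-1}\to L^{n-1}$ bound for $\cN^2$, whereas the conjecture demands the strictly stronger off-diagonal $L^{n/2}\to L^n$ bound. Obtaining the sharp base case therefore itself requires a genuinely stronger estimate---for instance, via Plancherel-plus-Kakeya arguments exploiting the maximal Kakeya estimates of~\cite{dhar2022maximal} at the sharp endpoint, or by first proving a restricted weak-type $(n/2,n)$ bound and upgrading via Marcinkiewicz interpolation.

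\textbf{Main obstacle.} Even granting a sharp base case, the induction does not close cleanly: the target exponent pair $(n/k,n)$ at level $(n,k)$ does not match the hypothesis pair $((n-1)/(k-1),n-1)$ at level $(n-1,k-1)$, so a single application of H\"older's inequality or real interpolation against the trivial $L^\infty\to L^\infty$ bound fails to convert one to the other. Plausible workarounds include proving a stronger mixed-norm or weighted version of the conjecture that is stable under the one-codimensional projection, or executing Oberlin's Fourier-analytic argument $k-1$ times in a single pass rather than recursively. Ultimately, the analogous conjecture over $\R^n$ remains open for all $2\le k<\log_2 n$, so the main difficulty is genuine: any proof in the $p$-adic or profinite setting would likely have to exploit ultrametric features unavailable over $\R$, or await a corresponding advance in Euclidean harmonic analysis.
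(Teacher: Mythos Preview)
The statement you were asked to prove is a \emph{conjecture} in the paper, not a theorem: the paper offers no proof of it. The sharpest result actually established there is Theorem~\ref{thm:maxNorm}, the diagonal $L^{n-1}\to L^{n-1}$ bound for $\cN^2$; the $L^{n/k}\to L^n$ bound for $\cN^k$ is explicitly left open, in parallel with the unresolved Euclidean analogue.

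Your write-up is therefore not a proof but (accurately) a diagnosis of why the projection-and-induct strategy behind Theorem~\ref{thm:maxNorm} does not reach the conjectured endpoint. Your two obstructions are the right ones: the base case $k=2$ already requires the unproven off-diagonal $L^{n/2}\to L^n$ bound rather than the $L^{n-1}\to L^{n-1}$ bound the paper provides, and the exponent pairs $(n/k,n)$ and $((n-1)/(k-1),n-1)$ are incompatible under a single projection step, so the induction does not self-improve. This matches the paper's own remarks that its methods fall short of the conjecture and that one expects the correct bound to be $\delta^n$ rather than the $\delta^{2(n-1)}$ obtained. In short, there is no proof in the paper to compare against, and your proposal correctly does not claim to supply one.
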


The above conjecture cannot hold for $k=1$ because of the existence of zero measure sets containing line segments in every direction.

\subsection{Proof overview}
We just talk about $\Z_p$ here. Given a function $f:\Z_p^n\rightarrow \R_{\ge 0}$, we project it / take a quotient along direction $u\in \Gr(\Z_p^n,1)$ to get $f_u$ (we integrate along preimages to get the value of the function). $f_u$ as a function over $\Gr(\Z_p^n,1)\times \Z_p^{n-1}$ is referred to as the X-ray transform of $f$. We can show that $\cN^1 f_u$ is the same as $\cN^2 f$ for the subspaces $U\in \Gr(\Z_p^n,2)$ containing $u$.

If we had $\cN^1$ operator bounds over $\Z_p^{n-1}$, we could apply those on $f_u$ for a fixed $u$ and take average over $u$ to complete the proof. The problem is that no good $\cN^1$ bounds hold (because of the existence of $0$ measure Kakeya sets). But we do have $\cN^1$ bounds when the function $f$ is locally constant over balls of fixed radius (from \cite{dhar2022maximal}). If $f_i$ is constant over all balls of radius $p^{-i}$ (recall $\Z_p^n$ is made of $p^{in}$ disjoint balls of radius $p^{-i}$) we have (the term $i^n$ is simplified and not exactly correct but captures the growth rate),
$$\int \cN^1 f_{i,u}^{n-1} \,d\mu \le i^n \int f_{i,u}^{n-1} \,d\mu.$$
We use the above statement by decomposing $f=\sum_{i=0}^\infty f_i$ where $f_i$ is locally constant over balls of radius $p^{-i}$ (this is done by a Littlewood-Paley decomposition where the Fourier transform of $f_i$ is the Fourier transform of $f$ at a fixed scale).
Using a simple Fourier analytic argument we can show,
$$\iint_{u,x} f_{i,u}(x)^{n-1} \,  d\mu d\mu \le p^{-i} \int f^{n-1} d\mu.$$
Combining the previous two equations and adding them up for all $i$ gives us,
$$\left(\int \cN^2 f^{n-1}\, d\mu\right)^{1/n} \le \sum\limits_{i=0}^\infty \left(\iint \cN^1 f_{i,u}^{n-1} \, d\mu d\mu\right)^{1/n} \le \left( \sum\limits_{i=0}^\infty ip^{-i/n}\right) \left(\int f^{n-1} d\mu\right)^{1/n}.$$
As $\sum_{i=0}^\infty ip^{-i/n}<\infty$ we are done.

\subsection{Organization:} In Section~\ref{sec-prelim} we state some preliminaries about the $p$-adics and profinite integers and Fourier analysis over them. In Section~\ref{sec-proofs} we give the proof of the main theorem.

% We now give a short sketch of the proof to show $(n,2)$-Besicovtich sets do not exist. The key idea is to show that $S$ contains too many lines which will force it to have positive volume. Let $S\subseteq \F^n$ be a set which contains a $2$-flat in every direction. Given a line $L$ through the origin we consider the image of $S$ in the quotient $\F^n/L$. This image by construction will contain a line in every direction. By the Kakeya theorem for the $p$-adics we can say that this quotient has Hausdorff/Minkowski dimension $n-1$. The pre-image of this quotient are lines parallel through $L$ so it shows that a full dimensional set of lines parallel to $L$ is contained in $S$. As the choice of $L$ was arbitrary this means $S$ contains a full dimensional set of all lines.

% We then need to show that a set which contains too many lines must have positive volume. In the actual argument, we translate all statements to the setting of $\Z/N\Z$. The way to do it is by showing that covering $S$ with balls of small radius is the same as counting the number of points in $S \textsf{ (mod }N\textsf{)}$. We can now use Chebyshev's inequality to show a small set cannot intersect with too many lines in many points in $(\Z/N\Z)^n$. 

\section{Preliminaries}\label{sec-prelim}

\subsection{Facts about the $p$-adics and profinite rationals}

The elements of the $p$-adic integers $\Z_p$ are infinite series of the form $a_0+a_1p+\hdots+a_np^n+\hdots$. We note $\Z_p$ has a natural $\mod p^\ell$ map which maps $a_0+a_1p+\hdots+a_np^n+\hdots$ to $a_0+a_1p+\hdots+a_{\ell-1}p^{\ell-1}$. For a number $a\in\Z_p$, we define $v_p(a)=1/p^\ell$ where $\ell$ is the largest non-negative integer such that $a \mod p^\ell = 0$ (for $a=0$, $v_p(0)=0$). $v_p(a)$ gives a metric over $\Z_p$.

The ball $B_{p^{-\ell}}(r)$ of radius $p^{-\ell}$ centered at the $r\in Z_p$ is the set $r+p^\ell Z_p$. This means $p^\ell$ radius balls in $\Z_p$ are isomorphic to $\Z/p^\ell\Z$. This means $\Z_p$ is covered by disjoint $p^\ell$ many balls of radius $p^{-\ell}$. 
These same facts hold over $n$-dimensions. The metric over $\Z_p^n$ is the $\ell_\infty$ norm. $B_{p^{-\ell}}(r)$ for $r\in \Z_p^n$ is the set $r+p^\ell \Z_p^n$. $\Z_p^n$ is covered by disjoint $p^{\ell n}$ many balls of radius $p^{-\ell}$. For $x\in \Z_p^n$, we let $v_p(x)$ be the max of the $v_p$ over the coordinates. 
We let $\mu_p$ be the normalized $p$-adic Haar measure over $\Z_p^n$ (that is the Haar measure such that $\mu_p(\Z_p^n)=1$). The key property we need is that $\mu_p(B_{p^{-\ell}}(r))=p^{-\ell n}$ for any $\ell\ge 0$.

We let $\cS_{p^{-\ell}}(\Z_p^n)$ be functions over $\Z_p^n$ of the form 
$$\sum\limits_{r\in (\Z/p^\ell\Z)^n} a_r \Ind[B_{p^{-\ell}}(r)],$$
for $a_r\in \C$. We let $\cS(\Z_p^n)=\bigcup_{\ell=0}^{\infty} \cS_{p^{-\ell}}(\Z_p^n)$. By the Stone-Weierstrass theorem we know that $\cS(\Z_p^n)$ is dense in the space of $L_q(\Z_p^n)$ functions for all $\infty>q\ge 1$.

We also need to work with the dual group of $\Z_p$ which is $\Z[1/p]/\Z$. Elements in this group are represented by finite sums $\sum\limits_{i=J}^{-1} a_{i} p^i$, the group operation is by addition and $1=0$. Any element $g$ in the dual group gives a map from $\Z_p$ to $\Q/\Z$ by multiplication. To be precise if we write $g=a/p^J$ with $0\le a\in \Z$, $g$ maps $x$ to $(ax \mod p^J)/p^J$ in $\Q/\Z$. In general the dual of $\Z_p^n$ is $(\Z[1/p]/\Z)^n$ and each element $g$ in the dual group gives a map from $x\in \Z_p^n$ to $\langle x,g\rangle\in \Q/\Z$. This gives us a character of $\Z_p^n$ mapping $x$ to $e^{2\pi i \langle x,g\rangle}$. For $x\in (\Z[1/p]/\Z)^n$ we let $v_p(x)=M$ where $M$ is the smallest non-negative power $p^\ell$ of $p$ such that $p^\ell v = 0$ ($p^\ell v$ is just $v$ added to itself $p^\ell$ times). The group $(\Z[1/p]/\Z)^n$ is countable in size and has the discrete topology.

The profinite integers $\hZ$ is simply the product of $\Z_p$ for all prime $p$ with the product topology (and hence metrizable). It is possible to write a general element in $\hZ$ as an infinite series $a_11!+a_22!+\hdots+a_i i!+\hdots$ where $a_i\in\{0,\hdots,i-1\}$ (the equivalence between this and the product of $\Z_p$ can be shown using the Chinese remainder theorem). We can define a mod $N$ operation by mapping $\sum_{j=1}^\infty a_j j!$ to $\sum_{j=1}^{N-1} a_j j! \mod N$. For $a\in \hZ$, let $\hat{v}(a)=1/i$ where $i>0$ is the largest number such that $a$ mod $i$ is $0$ (we set $\hat{v}(0)=0$). We are taking a reciprocal to define $\hat{v}$ because we want to suggest that smaller $\hat{v}$ implies a smaller number\footnote{Indeed, $\hat{v}$ does give us scales but the scales are not naively ordered. To be precise $\hat{v}$ is ordered by divisibility of $1/\hat{v}$ with multiples being smaller, which means the ordering is a poset and not linear. A metric can be obtained by restricting to scales of the form $i!$ for some $i$.}.  

For a given positive integer $N$ and $x\in \hZ$, let $B_{1/N}(x)$ be the set of $y\in \hZ$ such that $N$ divides $1/\hat{v}(x-y)$.

Similar to the $p$-adic case we see that $\hZ$ can be covered by $N$ many disjoint balls $B_{1/N}(r)$ where $r$ can be thought to be from $\Z/N\Z$. 
For $x\in\hZ^n$ we let $\hat{v}(x_1,\hdots,x_n)=1/\operatorname{gcd}(1/\hat{v}(x_1),\hdots,1/\hat{v}(x_n))$. We similarly define $B_{1/N}$ for $\hZ^n$ and again $\hZ^n$ can be covered by $N^n$ many disjoint balls $B_{1/N}(r)$ where $r$ can be thought to be from $r\in (\Z/N\Z)^n$. Let $\mu_{\hZ^n}$ be the normalized Haar measure over $\hZ^n$ then we have $\mu_{\hZ^n}(B_{1/N}(x))=1/N^n$ for any $x\in \hZ^n$. 

We let $\cS_{1/N}(\Z_p^n)$ be functions over $\hZ^n$ of the form 
$$\sum\limits_{r\in (\Z/N\Z)^n} a_r \Ind[B_{1/N}(r)],$$
for $a_r\in \C$. We let $\cS(\hZ^n)=\bigcup_{N=1}^{\infty} \cS_{1/N}(\hZ^n)$, by the Stone-Weierstrass theorem we have that $\cS(\hZ^n)$ is dense in the space of $L_q(\hZ^n)$ functions for all $\infty>q\ge 1$.

As $\hZ$ is the product of all $\Z_p$, the dual of $\hZ$ is the product of all $\Z[1/p]/\Z$ which is $\Q/\Z$. Any element in $\Q/\Z$ can be written as $a/b$ with $0\le a<b$. Any element $a/b\in \Q/\Z$ gives a function from $\hZ$ to $\Q/\Z$ by mapping $x\in \hZ$ to $(ax \mod b)/b$. In general $(\Q/\Z)^n$ is the dual of $\hZ^n$ and an element $g\in (\Q/\Z)^n$ gives a map of $x\in \hZ^n$ to $\langle x,g\rangle\in \Q/\Z$. This means $g\in (\Q/\Z)^n$ gives us the character $x$ to $e^{2\pi i \langle x, g\rangle}$. $(\Q/\Z)^n$ is countable in size and has the discrete topology. For a given $g\in (\Q/\Z)^n$ we let $\hat{v}(\Q/\Z)=N$ where $N$ is the smallest positive number such that $Ng=0$. 

Next we give a definition of subspaces in $\Z_p^n$ and $\hZ^n$.

\begin{define}[Grassmanian in $\Z_p^n$ and $\hZ^n$]
The Grassmanian $\Gr(\Z_p^n,k)$ of $k$-dimensional subspaces in $\Z_p^n$ is the set of sub-modules generated by the rows of a $k\times n$ matrix such that one of its $k\times k$ minors is a unit in $\Z_p$. 

The Grassmanian $\Gr(\hZ^n,k)$ of $k$-dimensional subspaces in $\hZ^n$ is the set of sub-modules generated by the rows of a $k\times n$ matrix such that for every prime $p$ there is one $k\times k$ minor which is non zero mod $p$ (in other words the $\hat{v}$ of the Pl\"ucker coordinates is $1$). 
\end{define}

We also define them over $(\Z/N\Z)^n$.

\begin{define}[Grassmanian in $(\Z/N\Z)^n$]
The Grassmanian $\Gr((\Z/N\Z)^n,k)$ is the set of sub-modules generated by the rows of a $k\times n$ matrix $M$ such that for every prime $p$ which divides $N$ there is one $k\times k$ minor of $M$ which is non zero mod $p$. 
\end{define}

We use $\P \F^{n-1}$ to refer to $\Gr(\F^n,1)$ and we can identify $\P \F^{n-1}$ as a subset of $\F^n$ by picking representatives. The Haar measure on $\F$ also induces a measure on $\Gr(\F^n,k)$.

We note by the Chinese Remainder Theorem, if $N=q_1\hdots q_r$ where $q_i$ are powers of distinct primes, $\Gr((\Z/N\Z)^n,k)=\Gr((\Z/q_1\Z)^n,k)\times \hdots\times \Gr((\Z/q_r\Z)^n,k).$

For a given $u\in \P \F^{n-1}$, we let $Q_u$ be the quotient $\F^n/\langle u \rangle$ which is isomorphic to $\F^{n-1}$. We choose representative vectors in $\P Q_u$ such that they are a subset of $\P \F^{n-1}$. 

\subsection{Fourier Analysis over $\Z_p$ and $\hZ$}

To adapt the Fourier analytic argument of Oberlin~\cite{Oberlin2005BoundsFK} we cover some 
   simple facts about Fourier Analysis over $\Z/N\Z$. Let $e$ be the map from $\Q/\Z$ to $\C$ which maps $x\in \Q/\Z$ to $e^{2\pi i x}\in \C$. Given $u\in \P \Z_p^{n-1}$ ,we let $u^\perp=\{a\in (\Z[1/p]/\Z)^n| \langle u,a\rangle = 0\}.$ We similarly define $u^\perp$ for $u \in \hZ^{n-1}$.

We will need a simple lemma about the density of vectors in $\P \F^{n-1}$ orthogonal to a fixed $v\in \D^n$ where $\D$ is the dual of $\F$. 

% To state the lemma we need to first define a simple quantity which measures the number of unique scalar multiples of $v$.

% For a $x\in (\Z/N\Z)^n$ and a divisor $d$ which divides $N$, we let $r_d(x)$ be the smallest integer in $t\in\{1,\hdots,d\}$ such that $tx \mod d=0\in (\Z/d\Z)^n$.

\begin{lem}\label{lem:radiusN}
$\F=\Z_p,\hZ$ and $\D=\Z[1/p]/\Z, \Q/\Z$ respectively. For a given $a\in \D^n$, the set of $u\in \P \F^{n-1}$ such that $a\in u^\perp$ has measure
$$ \frac{|\P (\Z/N\Z)^{n-2}|}{|\P (\Z/N\Z)^{n-1}|},$$
where $N=v(a)$ {\em ($v=\hat{v}$ for $\F=\hZ$ and $v=v_p$ for $\F=\Z_p$).} 
\end{lem}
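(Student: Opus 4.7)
The plan is to reduce the statement to a counting question in the finite ring $\Z/N\Z$, and then to an elementary fact about hyperplanes through a primitive vector. The first step is to unwind the hypothesis $v(a)=N$: it says precisely that $N$ is the smallest positive integer with $Na=0$ in the discrete dual group $\D^n$. Consequently, for every $u\in \F^n$, the value $\langle u,a\rangle\in \Q/\Z$ depends only on the reduction of $u$ modulo $N$, so the event $\{a\in u^\perp\}$ factors through the natural surjection $\pi:\P\F^{n-1}\to \P(\Z/N\Z)^{n-1}$.

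The next step is to check that $\pi$ is measure-preserving in the sense that it pushes the normalized measure on $\P\F^{n-1}$ forward to the uniform probability measure on $\P(\Z/N\Z)^{n-1}$. For $\F=\Z_p$ with $N=p^\ell$ this is immediate: each fibre of $\pi$ is a translate of a fixed ball $p^\ell\Z_p^n$ intersected with the primitive cone, so all fibres carry equal Haar measure. For $\F=\hZ$, combine the factorization $\hZ=\prod_p\Z_p$ with the product decomposition $\Gr((\Z/N\Z)^n,1)=\prod_i \Gr((\Z/q_i\Z)^n,1)$ from the preliminaries to reduce to the $\Z_p$ case one prime at a time.

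It then suffices to count points $\bar u\in \P(\Z/N\Z)^{n-1}$ annihilating the image of $a$. Identify $a$ with the vector $b\in(\Z/N\Z)^n$ coming from the isomorphism $N^{-1}\Z/\Z\cong \Z/N\Z$; the hypothesis $v(a)=N$ is exactly the statement that $b$ is primitive. Since $b$ is primitive, it extends to a basis of $(\Z/N\Z)^n$ (using a $\mathrm{GL}_n(\Z/N\Z)$ change of coordinates, or Smith normal form after splitting by primes), so $b^\perp$ is a free direct summand isomorphic to $(\Z/N\Z)^{n-1}$, and the notion of a primitive generator transfers under this isomorphism. Hence the count is $|\Gr((\Z/N\Z)^{n-1},1)|=|\P(\Z/N\Z)^{n-2}|$, and dividing by $|\P(\Z/N\Z)^{n-1}|$ gives the formula in the lemma.

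The main obstacle I expect is the measure-bookkeeping: making precise the normalization of the Haar-induced measure on $\P\F^{n-1}$ and verifying that reduction modulo $N$ is measure-preserving with respect to it, especially in the profinite case where one must combine infinitely many primes consistently. Once those compatibilities are in place, the group-theoretic count is routine and is automatically multiplicative over the prime-power factorization of $N$, matching the claimed formula.
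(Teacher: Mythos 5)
Your proof is correct and follows essentially the same route as the paper's: reduce the condition $a\in u^\perp$ modulo $N=v(a)$, observe that reduction mod $N$ pushes the normalized Haar measure on $\P \F^{n-1}$ forward to the uniform measure on $\P (\Z/N\Z)^{n-1}$ (equal fibres), and count the projective points annihilating the primitive vector $a'$ as $|\P (\Z/N\Z)^{n-2}|$. Your justification that $(a')^\perp$ is a free direct summand isomorphic to $(\Z/N\Z)^{n-1}$, with primitivity preserved under the change of basis, is slightly more explicit than the paper's, which simply asserts the isomorphism, but the argument is the same.
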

\begin{proof}
We give the proof for $\F=\hZ$ and $\D=\Q/\Z$. The proof is the same for $\Z_p$ and $\Z[1/p]/\Z$ (with $v_p$ used instead of $\hat{v}$). 

For $a\in (\Q/\Z)^n$, say $\hat{v}(a)=N$. This means $a=a'/N$ where $a'\in \{0,\hdots,N-1\}^n$ and $ta' \mod N\ne 0$ for $t=1,\hdots,N-1$. We also use $a'$ to refer to $a' \mod N \in (\Z/N\Z)^n$. We now see that for any $u \in \P \hZ^{n-1}, \langle u,a\rangle = 0$ if and only if  $\langle u \mod N, a' \rangle = 0\in \Z/N\Z$. For $u\in \P (\Z/N\Z)^{n-1}$, $u + N \hZ^n$ partitions $\P \hZ^{n-1}$ into  $|(\Z/N\Z)^{n-1}|$ many disjoint isomorphic pieces. Therefore $u + N \hZ^n\subseteq \P \hZ^{n-1}$ has measure $1/|\P (\Z/N\Z)^{n-1}|$.

As $ta' \mod N\ne 0$ for $t\in \{0,\hdots,N-1\}$ then the set of $b\in (\Z/N\Z)^n$ such that $\langle b,a'\rangle =0$ is isomorphic to $(\Z/N\Z)^{n-1}$. This means that the number of $u \in \P (\Z/N\Z)^{n-1}$ such that $\langle u, a'\rangle =0$ is $|\P (\Z/N\Z)^{n-2}|$. 
\end{proof}

\begin{define}[Fourier transform over $\Z_p$ and $\hZ$]
$\F=\Z_p,\hZ$ and $\D=\Z[1/p]/\Z, \Q/\Z$ respectively. Given a $f:\F^n\rightarrow \C$, the Fourier transform $\hat{f}: \D^n\rightarrow \C$ is defined as
$$\hat{f}(a)= \int\limits_{x\in \F^n} e(\langle x,u\rangle)f(x)\, d\mu,$$
where $\mu$ is the normalized Haar measure on $\F^n$. The inverse of the above is,
$$f(x)= \sum\limits_{a\in \D^n} e(-\langle x,v\rangle)\hat{f}(a).$$
\end{define}

We have Plancherel's Theorem
$$\int_{x\in \F^n} |f(x)|^2\, d\mu  = \sum\limits_{a\in \D^n} |\hat{f}(a)|^2,$$
where $\F=\Z_p,\hZ$ and $\D=\Z[1/p]/\Z, \Q/\Z$ respectively.

We also need to define the X-ray transform which looks at the pushforward of a function on quotients along every direction.

\begin{define}[X-ray transform]
Given a $f:\F^n\rightarrow \C$ (where $\F=\Z_p,\hZ,\Z/N\Z$) we define $f_u: Q_u \rightarrow \C$ for $u\in \P \F^{n-1}$ as,
$$f_u(x)=\int_{t\in \F} f(x+tu) \,d\mu,$$
where $\mu$ is the normalized Haar measure over $\F^n$.
\end{define}

We next prove a key lemma which will connect the $\ell^2$ norm of the X-ray transform with a Sobolev norm of $f$ (analogous to Lemma 3.1 in \cite{Oberlin2005BoundsFK}).

\begin{lem}
Given $f:\F^n\rightarrow \C$ {\em (where $\F=\Z_p$ or $\hZ$, $\D=\Z[1/p]/\Z, \Q/\Z$, and $v=v_p,\hat{v}$ respectively) },

$$\int\limits_{u\in \P \F^{n-1}} \int\limits_{x\in Q_u} |f_u(x)|^2\, d\mu \,d\nu=\sum\limits_{a\in \D^n} \frac{|\P (\Z/v(a)\Z)^{n-2}|}{|\P (\Z/v(a)\Z)^{n-1}|} |\hat{f}(a)|^2,$$
where $\mu$ is the normalized Haar measure on $\F^{n-1}$ and $\nu$ is the normalized Haar measure on $\P \F^{n-1}$.
\end{lem}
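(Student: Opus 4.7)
The plan is to reduce the X-ray transform computation to two ingredients: Plancherel's theorem applied fibrewise on each quotient $Q_u$, and Lemma~\ref{lem:radiusN} to count how often a dual element $a \in \D^n$ satisfies $a \in u^\perp$.

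First I would fix $u \in \P\F^{n-1}$ and identify the dual group of $Q_u \cong \F^{n-1}$ with $u^\perp \subseteq \D^n$, since characters of $Q_u$ are precisely the characters of $\F^n$ that are trivial on $\langle u\rangle$. For $b \in u^\perp$, the Fourier coefficient of $f_u$ on $Q_u$ is
$$\hat{f_u}(b) = \int_{x \in Q_u} e(\langle x, b\rangle) f_u(x)\,d\mu.$$
Plugging in the definition of $f_u$, interchanging the two integrations, and performing the substitution $y = x + tu$ (using that the Haar measure on $\F^n$ factors as the product of the Haar measures on $Q_u$ and $\langle u\rangle$) gives $\hat{f_u}(b) = \hat{f}(b)$, because $e(\langle tu, b\rangle) = 1$ for $b \in u^\perp$. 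Plancherel on the compact abelian group $Q_u$ then yields
$$\int_{Q_u} |f_u(x)|^2\,d\mu = \sum_{b \in u^\perp} |\hat{f}(b)|^2.$$

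Next I would integrate both sides over $u \in \P\F^{n-1}$ and swap the sum and integral (Tonelli applies since all terms are non-negative):
$$\int_{\P\F^{n-1}} \sum_{b \in u^\perp} |\hat{f}(b)|^2\,d\nu = \sum_{a \in \D^n} |\hat{f}(a)|^2 \cdot \nu\bigl(\{u \in \P\F^{n-1} : a \in u^\perp\}\bigr).$$
By Lemma~\ref{lem:radiusN}, the inner measure equals $|\P(\Z/v(a)\Z)^{n-2}|/|\P(\Z/v(a)\Z)^{n-1}|$, which matches the claim.

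The main technical check is the identity $\hat{f_u}(b) = \hat{f}(b)$ for $b \in u^\perp$, which rests on the compatibility of the Haar measures under the short exact sequence $0 \to \langle u\rangle \to \F^n \to Q_u \to 0$; this is standard but deserves a careful write-up. A minor edge case is $a = 0$, where $v(a) = 1$ and the ratio must be interpreted as $1$, consistent with the fact that $\nu(\{u : 0 \in u^\perp\}) = 1$. For rigour it also seems cleanest to first verify the identity for $f$ in $\cS(\F^n)$ — where the Fourier expansion is effectively finite and all interchanges are trivially justified — and then extend to general $L^2$ functions by the density of $\cS(\F^n)$ and continuity of both sides in the $L^2$ norm of $f$.
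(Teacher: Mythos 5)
Your proposal is correct and follows essentially the same route as the paper: both reduce to the fiberwise identity $\int_{Q_u}|f_u|^2\,d\mu=\sum_{a\in u^\perp}|\hat{f}(a)|^2$ (proved first on the dense class $\cS(\F^n)$) and then average over $u$, invoking Lemma~\ref{lem:radiusN} for the measure of $\{u: a\in u^\perp\}$. The only difference is in how the fiberwise identity is obtained --- you use Pontryagin duality for the quotient ($\widehat{Q_u}\cong u^\perp$) together with the projection--slice identity $\hat{f_u}(b)=\hat{f}(b)$ and Plancherel on $Q_u$, while the paper computes both sides explicitly as the autocorrelation $\iint f(z)\overline{f(z+tu)}$ via character sums on a discretization --- which is a cosmetic distinction.
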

\begin{proof}
As usual $\F=\Z_p,\hZ$ and $\D=\Z[1/p]/\Z, \Q/\Z$ respectively.
\begin{claim}
For $u\in \P \F^{n-1}$ we have,
$$\int\limits_{x\in Q_u} |f_u(x)|^2 \,d\mu=\sum\limits_{a\in u^\perp} |\hat{f}(a)|^2. $$
\end{claim}
\begin{proof}
We prove this claim for $f\in \cS(\F^n)$ which suffices as these functions are dense in the space of $L_2$ functions over $\F^n$. Let $f\in \cS_{1/N}(\F^n)$ ($N$ will be a power of $p$ for $\F=\Z_p$ and a positive integer for $\F=\hZ$).
\begin{align*}
\sum\limits_{a\in u^\perp} |\hat{f}(a)|^2&=\sum\limits_{a\in u^\perp}\iint_{x,y\in \F^n} e(\langle x-y,a\rangle)f(x)\overline{f(y)}\, d\mu d\mu\\
&=N^{-2n}\sum\limits_{a\in u^\perp}\sum\limits_{x,y\in (\Z/N\Z)^n}\iint_{x',y'\in \F^n} e(\langle x-y+N(x'-y'),a\rangle)f(x)\overline{f(y)}\, d\mu d\mu,
\end{align*}
as $f$ is constant over $x+N\F^n$. We see that if $Na\ne 0$ then $\iint_{x',y'\in \F^n} e(\langle x-y+N(x'-y'),a\rangle)=0$ and otherwise it equals $e(\langle x-y,a\rangle)$. The set of $a\in u^\perp$ such that $Na=0$ is finite. We therefore get,
$$\sum\limits_{a\in u^\perp} |\hat{f}(a)|^2= N^{-2n}\sum\limits_{a\in u^\perp, Na=0}\sum\limits_{x,y\in (\Z/N\Z)^n}e(\langle x-y,a\rangle)f(x)\overline{f(y)}.$$

% \iint\limits_{x,y\in \F^n} \sum\limits_{v\in u^\perp} e(\langle x-y,v\rangle)f(x)\overline{f(y)}\, d\mu d\mu.

We see that if $x-y$ is not a multiple of $u$ then $\sum\limits_{a\in u^\perp, Na=0} e(\langle x-y,v\rangle)=0$ and if $x-y$ is a multiple of $u$ then $\sum_{a\in u^\perp, Na=0} e(\langle x-y,a\rangle)=N^{n-1}$. Using this in the above equation gives us,
\begin{equation*}\label{eq-xnorm1}
\sum\limits_{a\in u^\perp} |\hat{f}(a)|^2=N^{-n-1}\sum\limits_{x\in (\Z/N\Z)^n}\sum\limits_{t\in \Z/N\Z} f(x)\overline{f(x+tu)}=\iint\limits_{x\in \F^n,t\in \F} f(z)\overline{f(z+tu)}.
\end{equation*}
We also have,
$$\int\limits_{x\in Q_u} |f_u(x)|^2\,d\mu=\iiint\limits_{x\in Q_u,t_1,t_2\in \F} f(x+t_1u)\overline{f(x+t_2u)}=\iint\limits_{x\in \F^n,t\in \F} f(z)\overline{f(z+tu)}.$$
\end{proof}

Using the above claim we have,
\begin{equation*}\label{eq-xnorm2}
\int\limits_{u\in \P \F^{n-1}} \int\limits_{x\in Q_u} |f_u(x)|^2 \,d\mu\,d\nu=\int\limits_{u\in \P \F^{n-1}}\sum\limits_{v\in u^\perp} |\hat{f}(v)|^2.
\end{equation*}

To simplify the above sum for a given $v\in \D^n$, we want the measure of the set of $u\in \P \F^{n-1}$ such that $v\in u^\perp$. Lemma~\ref{lem:radiusN} does exactly that completing the proof.
\end{proof}

We want to use complex interpolation with the above lemma. We will use the Riesz-Thorin theorem and we state it for the case we need.

\begin{thm}[Riesz-Thorin interpolation (See Chapter 4 in \cite{katznelson_2004})]
Let $W$ be a linear operator mapping complex valued functions $f$ over $\F^n$ (where $\F=\hZ$ or $\Z_p$) to complex valued functions over some other compact measurable domain $G$ with measure $\nu$ (for us $G=\P \F^{n-1}\times \F^n$). If we have 
$$ \int\limits_{y\in G} |Wf(y)|^{p_1}\, d\nu\le B\int\limits_{y\in \F^n} |f(x)|^{p_1}\, d\mu,$$ 
and 
$$ \sup\limits_{y\in G'} |Wf(y)| \le \sup\limits_{x\in \F^n} |f(x)|$$
for some constant $B>0$ and $1\le p< \infty$ we have
$$ \int\limits_{y\in G} |Wf(y)|^{p_\theta}\, d\nu\le B \int\limits_{y\in \F^n} |f(x)|^{p_\theta}\, d\mu,$$
where $p_\theta = p/\theta$ for any $\theta \in [0,1]$.
\end{thm}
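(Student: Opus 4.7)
The plan is to run the standard complex interpolation argument: build an analytic family of functions that agrees with the target at $z=\theta$, control it on the lines $\Re z=0$ and $\Re z=1$ using the two hypotheses, and conclude with Hadamard's three-lines theorem on the closed strip $0\le \Re z\le 1$.

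By the density of $\cS(\F^n)$ in $L^q(\F^n)$ for $1\le q<\infty$ recorded in Section~\ref{sec-prelim}, it suffices to prove the bound for simple $f$. By duality (since $p_\theta<\infty$),
$$\Big(\int_G |Wf|^{p_\theta}\, d\nu\Big)^{1/p_\theta} = \sup_{g}\, \Big|\int_G (Wf)(y)\, g(y)\, d\nu\Big|,$$
where the supremum runs over simple $g$ on $G$ with $\|g\|_{p_\theta'}=1$, and $p_\theta'$ is the conjugate exponent. Normalize $\|f\|_{p_\theta}=1$ and write $f=\sum_k |c_k|\epsilon_k \Ind_{E_k}$, $g=\sum_j |d_j|\eta_j \Ind_{F_j}$ with disjoint $E_k$, disjoint $F_j$, and unit complex scalars $\epsilon_k,\eta_j$.

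For $z$ in the closed strip define
$$f_z=\sum_k |c_k|^{p_\theta z/p_1}\epsilon_k \Ind_{E_k},\qquad g_z=\sum_j |d_j|^{p_\theta'(1-z+z/p_1')}\eta_j \Ind_{F_j},$$
arranged so that $f_\theta=f$ and $g_\theta=g$. Set $\Phi(z)=\int_G (Wf_z)(y)\, g_z(y)\, d\nu$. Because $f_z$ is a finite $\C$-linear combination of fixed indicators whose coefficients are exponentials in $z$, and $W$ is linear, $\Phi$ extends to an entire function bounded on the closed strip. On $\Re z=0$ one checks $\|f_z\|_\infty\le 1$ and $\|g_z\|_1=1$, so the $L^\infty$ hypothesis and H\"older give $|\Phi(z)|\le 1$. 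On $\Re z=1$ one checks $\|f_z\|_{p_1}^{p_1}=\sum_k |c_k|^{p_\theta}\mu(E_k)=1$ and $\|g_z\|_{p_1'}^{p_1'}=\sum_j |d_j|^{p_\theta'}\nu(F_j)=1$, so the $L^{p_1}$ hypothesis and H\"older give $|\Phi(z)|\le B^{1/p_1}$. Hadamard's three-lines theorem then yields $|\Phi(\theta)|\le B^{\theta/p_1}$, hence $\|Wf\|_{p_\theta}\le B^{\theta/p_1}\|f\|_{p_\theta}$; raising to the $p_\theta$ power gives the stated inequality via $\theta p_\theta/p_1=1$.

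The main obstacle is the bookkeeping of exponents: one must verify that the linear functions $\alpha(z)=p_\theta z/p_1$ and $\beta(z)=p_\theta'(1-z+z/p_1')$ simultaneously satisfy $\alpha(\theta)=1$, $\beta(\theta)=1$ (so $f_\theta=f$, $g_\theta=g$), and the boundary identities $\Re\alpha(0)=0$, $p_1\Re\alpha(1)=p_\theta$, $\Re\beta(0)=p_\theta'$, $p_1'\Re\beta(1)=p_\theta'$. All of these reduce to the single relation $1/p_\theta=\theta/p_1$ (equivalently $1/p_\theta'=1-\theta/p_1$), which is precisely the defining relation of the interpolated exponent $p_\theta=p_1/\theta$. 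Once this relation is in hand, the analyticity and boundedness of $\Phi$ on the strip are immediate for simple functions, and the argument closes.
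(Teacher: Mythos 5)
Your proof is correct: the exponent bookkeeping ($\alpha(\theta)=\beta(\theta)=1$, the $\Re z=0$ and $\Re z=1$ boundary norms, and the final identity $\theta p_\theta/p_1=1$ recovering the constant $B$ on the $p_\theta$-th powers) all checks out. The paper does not prove this theorem itself but only cites Chapter 4 of \cite{katznelson_2004}, and your argument is precisely the standard three-lines/analytic-family proof given there, so there is nothing further to compare.
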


\begin{lem}\label{lem:freqBound}
$\F=\Z_p$ or $\hZ$ and $\D=\Z[1/p]/\Z$ or $\Q/\Z$ respectively. Let $f:\F^n \rightarrow \C$ we have the following inequality,
$$\int\limits_{u\in \P \F^{n-1}} \int\limits_{x\in Q_u} |f_{M_1,M_2,u}(x)|^p\, d\mu \,d\nu\le \frac{|\P (\Z/M_1\Z)^{n-2}|}{|\P (\Z/M_1\Z)^{n-1}|} \int\limits_{x\in \F^n} |f(x)|^p\, d\mu,$$
for all $p\ge 2$, $M_1<M_2$ are positive integers for $\F=\hZ$ and powers of $p$ for $\F=\Z_p$, and $f_{M_1,M_2,u}$ is the X-ray transform of 
$$f_{M_1,M_2}(x) = \sum\limits_{a\in \D^n, M_1\le v(a)<M_2} e(-\langle x,v\rangle)\hat{f}(a).$$
\end{lem}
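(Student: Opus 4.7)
The plan is to interpolate via the Riesz--Thorin theorem just stated, between an $L^2$ endpoint coming from the preceding lemma applied to $f_{M_1,M_2}$ and an essentially trivial $L^\infty$ endpoint. For the $p=2$ case, apply the preceding lemma to $f_{M_1,M_2}$ itself, whose Fourier transform equals $\hat f$ on $\{a : M_1 \le v(a) < M_2\}$ and vanishes elsewhere; this gives
\[
\int_{u\in\P\F^{n-1}}\int_{x\in Q_u} |f_{M_1,M_2,u}(x)|^2\, d\mu\,d\nu \;=\; \sum_{M_1\le v(a)<M_2} \frac{|\P(\Z/v(a)\Z)^{n-2}|}{|\P(\Z/v(a)\Z)^{n-1}|}\,|\hat f(a)|^2.
\]
Writing $\rho(N) := |\P(\Z/N\Z)^{n-2}|/|\P(\Z/N\Z)^{n-1}|$, a short count produces the multiplicative formula $\rho(N) = \tfrac{1}{N}\prod_{p\mid N}\tfrac{p(p^{n-1}-1)}{p^n-1}$, from which one deduces $\rho(N) \le \rho(M_1)$ whenever $M_1 \mid N$. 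Pulling $\rho(M_1)$ out of the sum and invoking Plancherel yields the $L^2$ bound $\int\!\int|f_{M_1,M_2,u}|^2 \le \rho(M_1)\|f\|_2^2$.

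The $L^\infty$ endpoint is nearly immediate: the X-ray transform is a pointwise average, so $\|f_{M_1,M_2,u}\|_\infty \le \|f_{M_1,M_2}\|_\infty$; and the Littlewood--Paley piece $f_{M_1,M_2} = P_{<M_2} f - P_{<M_1} f$ is a difference of convolutions with normalized indicators of balls in $\F^n$, hence controlled by $\|f\|_\infty$ up to an absolute constant that can be absorbed. Riesz--Thorin between the two endpoints then delivers the stated inequality for every $p \ge 2$.

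The step I expect to be the main obstacle is the monotonicity claim $\rho(v(a)) \le \rho(M_1)$ for $\F = \hZ$: $\rho$ is \emph{not} monotone in $N$ in the numerical order --- for instance $\rho(31) > \rho(30)$ when $n=3$ --- so the bound must come from divisibility rather than size comparison. This is harmless in the intended application, since the Littlewood--Paley decomposition will arrange that every scale $v(a)$ appearing in the Fourier support of $f_{M_1,M_2}$ is a multiple of $M_1$, but one must be careful to interpret the hypothesis ``$M_1 \le v(a)$'' divisibility-wise when invoking the lemma.
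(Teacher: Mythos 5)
Your proof is correct and follows the paper's own route exactly: the $L^2$ endpoint comes from applying the preceding X-ray/Sobolev lemma (equivalently, Lemma~\ref{lem:radiusN} plus Plancherel) to $f_{M_1,M_2}$, the $L^\infty$ endpoint is the trivial averaging bound, and Riesz--Thorin interpolation finishes. The two subtleties you flag --- that the Littlewood--Paley projection costs a factor $2$ at $L^\infty$ rather than $1$, and that the comparison $|\P(\Z/v(a)\Z)^{n-2}|/|\P(\Z/v(a)\Z)^{n-1}|\le|\P(\Z/M_1\Z)^{n-2}|/|\P(\Z/M_1\Z)^{n-1}|$ must be justified through divisibility rather than numerical order of $v(a)$ and $M_1$ --- are both genuine and are passed over silently in the paper's one-line justification.
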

\begin{proof}
The X-ray transform is a linear map on functions over $\F^n$ to functions over $\P \F^{n-1}\times \F^n$. Going from $f$ to $f_{M_1,M_2}$ is also a linear map. Therefore mapping $f$ to $f_{M_1,M_2,u}: \P \F^{n-1}\times \F^{n-1}\rightarrow \F^n$ is a linear map. Lemma~\ref{lem:radiusN} and Plancherel's theorem gives us,
$$\int\limits_{u\in \P \F^{n-1}} \int\limits_{x\in Q_u} |f_{M,u}(x)|^2\, d\mu \,d\nu\le \frac{|\P (\Z/M_1\Z)^{n-2}|}{|\P (\Z/M_1\Z)^{n-1}|} \int\limits_{x\in \F^n} |f(x)|^2\, d\mu.$$
By the triangle inequality also have 
$$\sup\limits_{u\in \P \F^{n-1}, x\in Q_u\cong \F^{n-1}} |f_{M,u}(x)| \le \sup\limits_{x\in \F^n} |f(x)|.$$ 
Riesz-Thorin interpolation now gives us the result.
\end{proof}

\subsection{Maximal bounds for $\cN^1$ over $(\Z/N\Z)^n$}

We will also need to use Maximal bounds for $\cN^1$ over $(\Z/N\Z)^n$ from \cite{dhar2022maximal}. The version stated below is derived using a simple argument from the statement in \cite{dhar2022maximal} which we give in the appendix.

\begin{thm}[Maximal Kakeya bounds over $\Z/N\Z$ for general $N$]\label{thm-maxEst}
Let $n,N>0$ be integers. For any function $f:(\Z/N\Z)^n\rightarrow \C$ we have the following bound,
% \sum\limits_{x\in (\Z/N\Z)^n} =\frac{C_{N,n}}{|\P(\Z/N\Z)^{n-1}|} \left(\sum\limits_{u\in \P (\Z/N\Z)^{n-1}} |\cN^1 
 % f(u)|^n\right)
$$\underset{x \in (\Z/N\Z)^{n}}{\E} |f(x)|^n \ge  C_{N,n}\underset{u \in \P (\Z/N\Z)^{n-1}}{\E}[|\cN^1 f(u)|^n],$$
where
{\em \begin{align*}
    C_{N,n}=N^{-C n\log(N)/\log \log N},
\end{align*}} and $C>0$ is a universal constant.
\end{thm}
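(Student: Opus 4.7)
The plan is to derive Theorem~\ref{thm-maxEst} from the main maximal estimate of \cite{dhar2022maximal}; by the paper's own remark, this is a ``simple argument'' to be given in the appendix, so I expect the bulk of the work to be a standard normalization and pigeonhole passage rather than a new Kakeya-type combinatorial estimate.

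First I would recall the precise form of the result in \cite{dhar2022maximal}. There are essentially three possibilities for how it is stated: (i) as a full functional $L^n \to L^n$ maximal inequality, (ii) as a ``fixed-translate'' averaged inequality controlling $\sum_u \bigl(\tfrac{1}{N} \sum_t |f|(a(u) + tu)\bigr)^n$ for an arbitrary assignment $u \mapsto a(u)$, or (iii) as a Kakeya-set bound asserting that every set in $(\Z/N\Z)^n$ containing a full line in every direction has size at least $C_{N,n}\, N^n$. Case (i) requires only a translation between counting and normalized counting measure and matching $\P(\Z/N\Z)^{n-1}$ to the direction set used in \cite{dhar2022maximal}, which is routine. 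Case (ii) is equally short: taking $a(u)$ to be the translate realizing the supremum in the definition of $\cN^1 f(u)$ immediately recovers the maximal version.

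The most interesting case is (iii). There I would argue by dyadic level-set decomposition: reduce to $f \ge 0$, and set $E_k := \{x : 2^k \le f(x) < 2^{k+1}\}$ on the input side and $F_j := \{u : 2^j \le \cN^1 f(u) < 2^{j+1}\}$ on the direction side. For each $u \in F_j$, by definition there is a translate $L_u$ of the line through $0$ in direction $u$ with $\tfrac{1}{N} \sum_{x \in L_u} f(x) \ge 2^j$; pigeonholing the contribution over scales $k$ yields some $k = k(u)$ with $|L_u \cap E_k| \ge 2^{j-k} N / O(\log \|f\|_\infty)$, and a further pigeonhole fixes a dominant scale $k_j$ for a constant fraction of $u \in F_j$. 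The resulting collection of lines is a density-$2^{j-k_j}$ Kakeya configuration (directions in $F_j$) sitting inside $E_{k_j}$. Applying the bound from \cite{dhar2022maximal} after a standard density-rescaling step --- e.g.\ restricting to a coset on which the configuration becomes full-density --- gives $|E_{k_j}| \ge C_{N,n} \cdot 2^{n(j-k_j)} |F_j|$.

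Finally, multiplying by $2^{jn}$ and summing, against the layer-cake representations $\sum_u (\cN^1 f(u))^n \sim \sum_j 2^{jn}|F_j|$ and $\sum_x f(x)^n \sim \sum_k 2^{kn}|E_k|$, produces the claimed inequality with a constant differing from $C_{N,n}$ by at most $(\log N)^{O(n)}$ factors. Since the stated constant $N^{-Cn\log N/\log\log N}$ dominates any power of $\log N$, these losses are absorbed harmlessly into the universal $C$. The main technical obstacle, should it arise, is the density-rescaling step bridging the full-density Kakeya-set bound to the density-$\delta$ regime required above; the standard trick of restricting to a random sub-coset where the configuration becomes full-density after rescaling is exactly where the polylog losses are incurred, and verifying that this step really does fit inside $C_{N,n}$ as stated is the part I would want to double-check carefully.
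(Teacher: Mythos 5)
Your hedged case (i) is the branch that actually occurs, but the feature of the source theorem that makes the reduction nontrivial is one you did not anticipate: the result in \cite{dhar2022maximal} (Theorem~\ref{thm-maxN} in the appendix) is a functional $L^n$ inequality that applies only to \emph{integer-valued} functions $f:(\Z/N\Z)^n\to\N$, and its constant depends on the quantity $\mweight(f,p_1)$, the maximal weight of $f$ along the $p_1$-part of a line. So the paper's ``simple argument'' is neither a measure-normalization nor your level-set machinery, but a rounding/discretization step: normalize $\sum_x f(x)^n=1$ (so $0\le f\le 1$), round up to $g(x)=\lceil Nf(x)\rceil/N$ so that $g\ge f$ hence $g^*\ge f^*$, check $\sum_x g(x)^n\le(2^n+1)\sum_x f(x)^n$ by splitting at the threshold $f(x)\le 1/(2N)$, apply the integer-valued theorem to $Ng$ using $\mweight(Ng,p_1)\le N^2$, and finally simplify the resulting constant via the divisor bound $\log\tau(N)=O(\log N/\log\log N)$ to reach the stated form $N^{-Cn\log N/\log\log N}$. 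Your dyadic decomposition into $E_k$ and $F_j$ with pigeonholing is essentially a heavier-duty version of the same discretization idea and would also land inside the (very generous) stated constant, as you correctly observe that polylog losses are absorbable; but the density-rescaling step you flag as the main obstacle never arises, because the cited result is already a weighted/functional statement rather than a set bound. The one substantive thing your write-up misses, and which you should verify if you finish the argument, is that the constant of the cited theorem is $f$-dependent through $\mweight$, so some a priori bound on the values of the discretized function (here $Ng\le N$, giving $\mweight\le N^2$) is genuinely needed before the constant can be made to depend on $N$ and $n$ alone.
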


% \begin{thm}[Maximal Kakeya bounds over $\Z/p^k\Z$]\label{thm-maxpk}
% Let $n>0$ be an integer, $p$ prime and $k\in \N$. For any function $f:(\Z/p^k\Z)^n\rightarrow \N$ we have the following bound,
% $$\sum\limits_{x\in (\Z/p^k\Z)^n} |f(x)|^n \ge C_{p^k,n}\underset{u \in \P (\Z/p^k\Z)^{n-1}}{\E}[|f^*(u)|^n]= \frac{C_{p^k,n}}{|\P(\Z/p^k\Z)^{n-1}|} \left(\sum\limits_{u\in \P (\Z/p^k\Z)^{n-1}} |f^*(u)|^n\right),$$
% where
% $$C_{p^k,n}= \frac{1}{(2(\lceil\log_p(\max_u f^*(u))+ \log_{p}(n)\rceil ))^n}.$$
% When $p>n$ we can improve $C_{p^k,n}$ to,
% $$C_{p^k,n}=(\lceil\log_p(\max_u f^*(u))\rceil+1)^{-n} (1+n/p)^{-n}.$$
% \end{thm}

% Let $T_u$ be the $\Z/N\Z$-isomorphism which maps $(\Z/N\Z)^{n-1}$ into $u^\perp$. We now have,
% $$\sum\limits_{u\in \P (\Z/N\Z)^{n-1}} \sum\limits_{v\in u^\perp} |\hat{f}(v)|^2= \sum\limits_{u\in \P (\Z/N\Z)^{n-1}} \sum\limits_{w\in (\Z/N\Z)^{n-1}} |\hat{f}(T_u(w))|^2.$$
% \begin{claim}[Polar decomposition]

% \end{claim}

\section{Proof of Theorem~\ref{thm:maxNorm}}\label{sec-proofs}
Throughout $\F=\Z_p,\hZ, \D=\Z[1/p]/\Z,\Q/\Z$, and $v=v_p,\hat{v}$. Without loss of generality we can assume $f$ is a function with outputs in the non-negative reals. We let $M_0,M_1,\hdots,M_j,\hdots$ be a sequence of scales. For $\F=\Z_p$ we set $M_i=p^i$ and $\F=\hZ$ we set $M_i=(i+1)!$. Given any function $f: \F^n \rightarrow \R_{\ge 0}$ and for each scale $M_i$ we let 

$$f_i(x)=\sum\limits_{a\in \D^n, M_i\le v(a) < M_{i+1}} e(-\langle x,v\rangle)\hat{f}(a).$$
$f_i$ are the Littlewood-Paley decomposition of $f$.
We note that $f_i$ is constant over cosets of $M_{i+1}\F^n$, this means $f_i$ induces a function $f'_i:(\Z/M_{i+1}\Z)^n\rightarrow \R_{\ge 0}$. 
% We will use $f_i$ to refer to both the function over $(\Z/M_{i+1}\Z)^n$ and $\F^n$. 

We now consider the X-ray transform $f_{i,u}:Q_u\rightarrow \R_{\ge 0}$ of $f_i$, where $u\in \P \F^{n-1}$. Consider a $w\in \P Q_u$ and $U\in \Gr(\F^n,2)$ such that $u,w\in U$. $u,w\in U$ and $w\in \P Q_u$  implies that $u$ and $w$ span $U$. For a fixed $u$ and $U$ containing $u$, $w\in  \P Q_u$ has to be unique and always exists. We then have 
\begin{equation}\label{eq:projMaxPlane2Line}
\cN^2 f_i (U) = \cN^1 f_{i,u}(w). 
\end{equation}
Let $Q_{i,u}$ be $Q_u \mod M_{i+1}$  (quotient by $M_{i+1}Q_u$). $Q_u$ is isomorphic to $\F^{n-1}$ and as before $f_{i,u}$ is constant over cosets of $M_{i+1}Q_u$ so $f_{i,u}$ induces a function $f'_{i,u}$ over $Q_{i,u}\cong (\Z/M_{i+1}\Z)^{n-1}$. It easily follows that $f'_{i,u}$ is the X-ray transform of $f_{i,u}$. We thus have,
\begin{equation}\label{eq:divisorMaxReduction}
\int\limits_{w \in \P Q_u} \cN^1 f_{i,u}(w)^{n-1} \,d\nu =\underset{w \in \P Q_{i,u}}{\E}[\cN^1 f'_{i,u}(w)^{n-1}],
\end{equation}
where $\nu$ is the normalized Haar measure on $\P Q_u\cong \P \F^{n-1}$. If we apply Theorem~\ref{thm-maxEst} on $f'_{i,u}: Q_{i,u}\cong (\Z/M_{i+1}\Z)^{n-1}\rightarrow \R_{\ge 0}$ we have,
\begin{equation*}
\underset{v \in \P Q_{i,u}}{\E}[\cN^1 f'_{i,u}(v)^{n-1}] C_{M_{i+1},n-1} \le \underset{x \in Q_{i,u}}{\E}  f'_{i,u}(x)^{n-1}=\int\limits_{x\in Q_u}f_{i,u}(x)^{n-1}\, d\mu,
\end{equation*}
where $C_{M_{i+1},n-1}=M_{i+1}^{-C (n-1)\log(M_{i+1})/\log \log M_{i+1}}$ and $C>0$ is a universal constant.

In the above equation applying \eqref{eq:divisorMaxReduction}, taking expectation over $u\in \P \F^{n-1}$, and finally applying \eqref{eq:projMaxPlane2Line} gives us,

\begin{equation*}
C_{M_{i+1},n-1}\int\limits_{ U \in \Gr(\F^n,2)} \cN^2 f_{i}(U)^{n-1} \, d\nu_2 \le  \iint\limits_{u \in \P \F^{n-1},x\in Q_u} f_{i,u}(x)^{n-1} \, d\nu d\mu,
\end{equation*}
where $\nu_2$ is the normalized Haar measure over $\Gr(\F^n,2)$. Applying Lemma~\ref{lem:freqBound} with the previous equation gives us,
\begin{equation}\label{eq-final1}
    C_{M_{i+1},n-1}\int\limits_{ U \in \Gr(\F^n,2)} \cN^2 f_{i}(U)^{n-1} \, d\nu_2 \le  {|\P (\Z/M_i\Z)^{n-2}| \over |\P (\Z/M_i\Z)^{n-1}|} \int\limits_{x\in \F^n} f(x)^{n-1} \, d\mu.
\end{equation}
Using the triangle inequality for the $n-1$ norm and the fact that $\cN^2 f + \cN^2 g \ge \cN^2 (f+g)$ we have,
\begin{align*}\label{eq-final2}\sum\limits_{i=0}^\infty \left(\int_{ U \in \Gr(\F^n,2)} \cN^2 f_{i}(U)^{n-1}\right)^{1/(n-1)} &\ge  \left(\int_{ U \in \Gr(\F^n,2)} \left(\sum\limits_{i=0}^\infty\cN^2 f_{i}(U)\right)^{n-1} \right)^{1/(n-1)}\\
&\ge \left(\int_{ U \in \Gr(\F^n,2)} \cN^2 f(U)^{n-1}\right)^{1/(n-1)}.
\end{align*}
Using \eqref{eq-final1} and the previous equation gives us,

$$\int\limits_{ U \in \Gr(\F^n,2)} \cN^2 f(U)^{n-1} \le \left(\sum\limits_{i=0}^\infty {|\P (\Z/M_i\Z)^{n-2}|^{1/(n-1)} \over C_{M_{i+1},n-1}^{1/(n-1)}|\P (\Z/M_i\Z)^{n-1}|^{1/(n-1)}}\right)^{n-1} \int\limits_{x\in \F^n} f(x)^{n-1} \, d\mu. $$

The following claim completes the proof,
\begin{claim}
 $$  \sum\limits_{i=0}^\infty   {|\P (\Z/M_i\Z)^{n-2}|^{1/(n-1)} \over C_{M_{i+1},n-1}^{1/(n-1)}|\P (\Z/M_i\Z)^{n-1}|^{1/(n-1)}}<\infty $$
\end{claim}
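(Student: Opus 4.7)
The plan is to split each summand into the Grassmannian ratio and the Kakeya constant factor, show that the ratio decays like $M_i^{-1}$ while the Kakeya factor grows only polylogarithmically in $M_{i+1}$, and conclude that the series converges by domination of geometric (or factorial) decay over polynomial-in-log growth.

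For the Grassmannian ratio, I would use the Chinese Remainder Theorem to reduce to prime powers. For a prime $p$, integer $a \ge 1$, and $k \ge 1$, the vectors in $(\Z/p^a\Z)^k$ with at least one unit coordinate number $p^{ak} - p^{(a-1)k}$, and the units of $\Z/p^a\Z$ number $p^{a-1}(p-1)$, giving
$$|\P(\Z/p^a\Z)^{k-1}| = p^{(a-1)(k-1)} \cdot \frac{p^k-1}{p-1}.$$
The per-prime-power ratio is therefore
$$\frac{|\P(\Z/p^a\Z)^{n-2}|}{|\P(\Z/p^a\Z)^{n-1}|} = \frac{1}{p^a} \cdot \frac{1-p^{-(n-1)}}{1-p^{-n}}.$$
Multiplying over the prime power factors of $M_i$ and using that $n-1 \ge 2$ makes both $\sum_p p^{-(n-1)}$ and $\sum_p p^{-n}$ convergent, the Euler-type product $\prod_p (1-p^{-(n-1)})/(1-p^{-n})$ is bounded above and below by positive constants depending only on $n$, so
$$\frac{|\P(\Z/M_i\Z)^{n-2}|}{|\P(\Z/M_i\Z)^{n-1}|} \asymp \frac{1}{M_i},$$
uniformly in $M_i$ with implied constants depending only on $n$.

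For the Kakeya factor, Theorem~\ref{thm-maxEst} gives $C_{M_{i+1},n-1}^{-1/(n-1)} = (\log M_{i+1})^{O_n(1)}$. Combining with the Grassmannian estimate, the $i$-th summand is bounded by
$$O_n\!\left( \frac{(\log M_{i+1})^{O_n(1)}}{M_i^{1/(n-1)}} \right).$$
For $\F = \Z_p$ with $M_i = p^i$ and $\log M_{i+1} = O(i)$, this is $O(i^{O_n(1)} p^{-i/(n-1)})$, summable as a polynomially-weighted geometric series. For $\F = \hZ$ with $M_i = (i+1)!$ and $\log M_{i+1} = O(i \log i)$, the factorial decay of $M_i^{-1/(n-1)}$ easily dominates the polylogarithmic growth and summability is immediate. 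The one subtle point is keeping the Euler-product constant uniform in the number of distinct prime divisors of $M_i$, which is exactly why the hypothesis $n \ge 3$ enters; the case $n = 2$ of Theorem~\ref{thm:noBesi} is trivial since the only $2$-dimensional subspace of $\F^2$ is $\F^2$ itself.
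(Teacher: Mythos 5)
Your overall strategy is the same as the paper's: compute $|\P (\Z/N\Z)^{k}|$ prime power by prime power via CRT, show the ratio $|\P (\Z/M_i\Z)^{n-2}|/|\P (\Z/M_i\Z)^{n-1}|$ is $O_n(1/M_i)$, and then argue that this decay beats the growth of $C_{M_{i+1},n-1}^{-1/(n-1)}$. Your Grassmannian computation is correct. The paper gets the cleaner bound $|\P (\Z/N\Z)^{n-2}|/|\P (\Z/N\Z)^{n-1}|\le 1/N$ directly from $\frac{p^{r(n-1)}-p^{(r-1)(n-1)}}{p^{rn}-p^{(r-1)n}}\le p^{-r}$, with no Euler-product constant and no restriction on $n$; your closing remark that $n\ge 3$ is what keeps the Euler product under control is a red herring, since for the upper bound you only need $\prod_p(1-p^{-n})^{-1}\le\zeta(2)<\infty$, which holds for all $n\ge 2$.

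The genuine problem is your treatment of the Kakeya factor. Theorem~\ref{thm-maxEst} as stated gives $C_{N,n}=N^{-Cn\log(N)/\log\log N}$, hence
$$C_{M_{i+1},n-1}^{-1/(n-1)}=M_{i+1}^{C\log(M_{i+1})/\log\log M_{i+1}},$$
which is superpolynomial in $M_{i+1}$, not $(\log M_{i+1})^{O_n(1)}$. With that factor your bound on the $i$-th summand is unsupported, and in fact the resulting series diverges for both choices of scale: for $M_i=p^i$ the terms behave like $\exp\left(Ci^2(\log p)^2/\log i - i\log p/(n-1)\right)\to\infty$, and similarly for $M_i=(i+1)!$. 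So the step ``the Kakeya factor grows only polylogarithmically'' is where the proof fails to close. (This also touches an apparent typo in the paper: the appendix actually derives $C_{N,n}\ge N^{-Cn\log(n)/\log\log N}$, with $\log(n)$ in place of $\log(N)$, and the paper's own ``clearly bounded'' conclusion likewise only holds with that corrected constant.) Even with the corrected constant the factor is $M_{i+1}^{C\log(n-1)/\log\log M_{i+1}}=M_{i+1}^{o(1)}$, which is still far larger than any fixed power of $\log M_{i+1}$; it merely happens to be dominated by the geometric (respectively factorial) decay of $M_i^{-1/(n-1)}$, and that exponent comparison is precisely what you need to write out. Replace the polylogarithmic claim by the true form of the constant and verify the comparison explicitly; the rest of your argument then matches the paper's.
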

\begin{proof}
We need the size of $\P (\Z/N\Z)^n$ for general $N=p_1^{r_1}\hdots p_t^{r_t}$ where $p_1<\hdots<p_t$ are distinct primes.
\begin{equation*} 
|\P (\Z/N\Z)^{n-1}| = \prod\limits_{j=0}^t\frac{p_j^{r_jn}-p_j^{(r_j-1)n}}{p_j^{r_j}-p_j^{r_j-1}}.
\end{equation*}
This implies,
$${|\P (\Z/N\Z)^{n-2}| \over |\P (\Z/N\Z)^{n-1}|}= \prod\limits_{j=0}^t\frac{p_j^{r_j(n-1)}-p_j^{(r_j-1)(n-1)}}{p_j^{r_jn}-p_j^{(r_j-1)n}}\le \prod\limits_{j=0}^t{\frac{1}{p_j^{r_j}}}\le \frac{1}{N}.$$
Finally we have,
$$\sum\limits_{i=0}^\infty   {|\P (\Z/M_i\Z)^{n-2}|^{1/(n-1)} \over C_{M_{i+1},n-1}^{1/(n-1)}|\P (\Z/M_i\Z)^{n-1}|^{1/(n-1)}}\le \sum\limits_{i=0}^\infty {M_{i+1}^{C\log M_{i+1}/\log \log M_{i+1}} \over M_i^{1/(n-1)}}.$$
The sum above is clearly bounded for $M_i=p^i$ or $M_i=(i+1)!$
\end{proof}

\bibliographystyle{alpha}
\bibliography{kakeya-proj}

\appendix

\section{Proof of Theorem~\ref{thm-maxEst}}

Theorem~\ref{thm-maxEst} easily follows from the results in \cite{dhar2022maximal}. To state the result from \cite{dhar2022maximal} we first need some simple facts which follow from the Chinese remainder theorem.

\begin{fact}[Geometry of $\Z/p^kN_0\Z$]\label{fact:geoChine}
Let $p,N_0,n,k\in \N,R=\Z/p^kN_0\Z,R_0=\Z/N_0\Z$ with $p$ prime and co-prime to $N_0$.  Using the Chinese remainder theorem we know that any co-ordinate in $R^n$ can be uniquely represented by a tuple in $(\Z/p^k\Z)^n\times R_0^n$. Also any direction in the projective space $\P R^{n-1}$ can again be uniquely represented by a tuple in $\P (\Z/p^k\Z)^{n-1} \times \P R_0^{n-1}$. Finally, any line $L$ with direction $b=(b_p,b_0)\in \P (\Z/p^k\Z)^{n-1} \times \P R_0^{n-1}$ in $R^n$ is equivalent to the product of a line $L_p\subset (\Z/p^k\Z)^n$ in direction $b_p$ and a line $L_0\subset R_0^n$ in direction $b_0$. 
\end{fact}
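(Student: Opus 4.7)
The plan is to derive all three parts from a single ring isomorphism and then track how the various definitions behave under it. The Chinese Remainder Theorem, applied to the coprime factorization $p^k N_0$, yields a ring isomorphism $\varphi: R \xrightarrow{\sim} \Z/p^k\Z \times R_0$ given by reduction mod $p^k$ and mod $N_0$. Applying $\varphi$ coordinate-wise gives an $R$-module isomorphism $R^n \xrightarrow{\sim} (\Z/p^k\Z)^n \times R_0^n$, which is precisely the first claim about coordinates.

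For the second claim I would recall the definition of $\Gr(R^n,1) = \P R^{n-1}$ from Section~\ref{sec-prelim}: a direction is the submodule generated by a vector $v \in R^n$ such that for every prime $q$ dividing $|R|$, at least one coordinate of $v$ is nonzero mod $q$. The primes dividing $|R|=p^kN_0$ are exactly $p$ together with the primes dividing $N_0$. Writing $\varphi(v)=(v_p,v_0)$, the condition at the prime $p$ depends only on $v_p$ (reducing mod $p$ factors through mod $p^k$), while the condition at a prime $q \mid N_0$ depends only on $v_0$. So the admissibility condition factorizes. For uniqueness of the direction, I would note that two generators $v,w$ determine the same submodule iff $w=\lambda v$ for a unit $\lambda \in R^\times$. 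Since $\varphi$ is a ring isomorphism, $\lambda$ is a unit iff both $\lambda_p \in (\Z/p^k\Z)^\times$ and $\lambda_0 \in R_0^\times$, so the scaling action also factorizes. Combining these two observations yields the bijection $\P R^{n-1} \cong \P(\Z/p^k\Z)^{n-1} \times \P R_0^{n-1}$.

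For the third claim, I would write an arbitrary line through a point $a \in R^n$ in direction $b$ as $L = \{a + tb : t \in R\}$, apply $\varphi$ coordinate-wise to get $a=(a_p,a_0)$, $b=(b_p,b_0)$, and use that scalar multiplication in $R$ corresponds under $\varphi$ to componentwise multiplication by $(t_p,t_0) \in \Z/p^k\Z \times R_0$. Hence
\[
\varphi(L) = \{(a_p + t_p b_p,\, a_0 + t_0 b_0) : (t_p,t_0)\in \Z/p^k\Z\times R_0\},
\]
and since $\varphi$ is a bijection the parameter $t$ ranges over all of $R$ iff $(t_p,t_0)$ ranges over the full product. This exhibits $L$ as the Cartesian product of $L_p=\{a_p+t_pb_p : t_p\in \Z/p^k\Z\}$ and $L_0=\{a_0+t_0b_0 : t_0 \in R_0\}$, which are lines in $(\Z/p^k\Z)^n$ and $R_0^n$ with the stated directions.

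The only subtle point is making sure that ``direction'' in the fact really matches the paper's definition of $\Gr(R^n,1)$ (as a submodule, not a vector), so that one has to verify compatibility under unit scaling rather than only under the bijection of underlying sets; this is handled in the second paragraph above. Everything else is bookkeeping once the ring isomorphism $\varphi$ is in place.
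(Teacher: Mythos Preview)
Your argument is correct and is exactly the routine verification the paper has in mind; the paper itself states this as a Fact without proof, merely remarking that it ``follow[s] from the Chinese remainder theorem.'' The one step you might make slightly more explicit is why two admissible generators $v,w$ of the same cyclic submodule must differ by a unit (from $w=\lambda v$, $v=\mu w$ one gets $(1-\lambda\mu)v=0$, and since for each prime power factor some coordinate of $v$ is a unit, $\lambda\mu\equiv 1$ in every factor), but this is standard and your treatment is otherwise complete.
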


\begin{define}[$p$-Maximal weight]
For $p$ and $N$ coprime and $f:(\Z/p^kN\Z)^n\rightarrow \N$ we define the $p$-maximal weight {\em $\mweight(f,p)$} as follows:

Let $L(u)=\{a_u+tu|t\in \Z/p^kN\Z\}$ be a line such that $\sum_{x\in L(u)} f(x)=f^*(u)$. Using Fact~\ref{fact:geoChine} we note that the line $L(u)$ can be written as a product of lines $L_p(u)\subseteq (\Z/p^k\Z)^n$ and $L_1(u)\subseteq (\Z/N\Z)^n$. We define,
{\em $$\mweight(f,p)=\sup_{u\in \P (\Z/p^kN\Z)^{n-1},z\in L_1(u)} \sum\limits_{x\in L_p(u)} f((x,z)).$$}
\end{define}

Note, $\mweight(f,p)$ for $f:(\Z/p^k\Z)^n\rightarrow \N$ is simply $\max_u f^*(u)$.

We can finally state the main theorem from \cite{dhar2022maximal}.
We also let 
$$f^*(U)= N \cN^1 f(U)= \sup\limits_{a\in \F^n} \sum\limits_{x\in U} |f(a+x)|.$$

\begin{thm}[Maximal Kakeya bounds over $\Z/N\Z$ for general $N$]\label{thm-maxN}
Let $n>0$ be an integer and $N=p_1^{k_1}\hdots p_r^{k_r}$ with $p_i$ primes and $k_i\in \N$. For any function $f:(\Z/N\Z)^n\rightarrow \N$ we have the following bound,
$$\sum\limits_{x\in (\Z/N\Z)^n} |f(x)|^n \ge  C_{N,n}\underset{u \in \P (\Z/N\Z)^{n-1}}{\E}[|f^*(u)|^n]=\frac{C_{N,n}}{|\P(\Z/N\Z)^{n-1}|} \left(\sum\limits_{u\in \P (\Z/N\Z)^{n-1}} |f^*(u)|^n\right),$$
where
{\em \begin{align*}
    C_{N,n}=&\left(\frac{1}{2(\log(\mweight(f,p_1))+1)\lceil \log_{p_1}(\mweight(f,p_1))+\log_{p_1}(n)\rceil}\right)^n\\
    &\cdot \left(\frac{1}{2(k_r+\lceil \log_{p_r}(n)\rceil)}  \prod\limits_{i=2}^{r-1}\frac{1}{2(k_i\log(p_i)+1)(k_i+\lceil \log_{p_i}(n)\rceil)}\right)^n
\end{align*}.}
\end{thm}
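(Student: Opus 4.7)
The plan is to prove the bound by induction on the number $r$ of distinct prime factors of $N$, peeling off one prime power at a time via the Chinese Remainder Theorem decomposition of Fact~\ref{fact:geoChine}.

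Base case ($r=1$): For $N = p^k$ a prime power, we invoke the core maximal Kakeya bound over $(\Z/p^k\Z)^n$ from \cite{dhar2022maximal}, which builds on Arsovski's polynomial-method breakthrough \cite{arsovskiNew}. This bound comes in two flavors. The generic form gives a constant of order $(2(k + \lceil \log_p n\rceil))^{-n}$; the refined form, which exploits that no single $p$-line can carry more than $\mweight(f,p)$ total mass of $f$, replaces $k$ by $\lceil \log_p \mweight(f,p) + \log_p n\rceil$ at the cost of an extra $(\log \mweight(f,p)+1)^{-n}$ slack factor. Both are produced by essentially the same argument, with the refined version using $\mweight$ to control the effective scale of the polynomial interpolation.

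Inductive step: Write $N = p^k M$ with $\gcd(p, M)=1$ and assume the bound for $(\Z/M\Z)^n$. By Fact~\ref{fact:geoChine}, every coordinate splits as $x=(x_p,x_M)\in (\Z/p^k\Z)^n\times(\Z/M\Z)^n$, every direction as $u=(u_p,u_M)$, and every line in direction $u$ factors as $L_p\times L_M$. For each choice of line $L_M\subset(\Z/M\Z)^n$ in direction $u_M$, define the partial-sum
$$g_{L_M}(x_p) = \sum_{x_M\in L_M} f(x_p,x_M),$$
so that for any line $L(u) = L_p \times L_M$ realizing $f^*(u)$ one has $\sum_{L(u)} f = \sum_{L_p} g_{L_M}$, whence $f^*(u) \le \sup_{L_M} g_{L_M}^*(u_p)$. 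Apply the prime-power maximal bound to each $g_{L_M}$ on $(\Z/p^k\Z)^n$, average over $u_M\in\P(\Z/M\Z)^{n-1}$ and over the starting point of $L_M$, and control the $n$-th moment of $g_{L_M}$ back in terms of $f$ via Hölder,
$$g_{L_M}(x_p)^n \le M^{n-1}\sum_{x_M\in L_M} f(x_p,x_M)^n.$$
The Hölder loss of $M^{n-1}$ is compensated by the normalization $|\P(\Z/M\Z)^{n-1}|$ appearing in the averaging, after which the inductive hypothesis applied to the $M$-sliced function closes the induction and multiplies in the new prime-power constant.

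The explicit ordering of primes in the formula for $C_{N,n}$ reflects an optimization of this iteration: $p_r$ is the innermost base case (so it contributes only the generic factor $(k_r+\lceil\log_{p_r}n\rceil)^{-n}$), each middle prime $p_i$ ($2\le i\le r-1$) contributes a peeling step with both a generic $(k_i+\lceil\log_{p_i}n\rceil)^{-n}$ piece and a $(k_i\log p_i+1)^{-n}$ piece coming from the averaging over line starting points in $(\Z/p_i^{k_i}\Z)^n$, and the outermost prime $p_1$ is handled using the $\mweight$-refined version of the prime-power bound (the one place where the $\mweight$ information is available and useful), yielding the $\mweight(f,p_1)$-dependent first factor. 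The main obstacle is the constant bookkeeping: one must verify at each inductive step that the Hölder loss $M^{n-1}$, the averaging denominator $|\P(\Z/M\Z)^{n-1}|$, the scaling $N^{-1}$ implicit in going from $f^*$ to $\cN^1 f$, and the polynomial-method constant fit together to produce exactly the clean multiplicative form displayed in $C_{N,n}$, without any hidden $N$- or $n$-dependent slack.
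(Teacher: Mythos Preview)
This theorem is not proved in the present paper at all. It is quoted verbatim as ``the main theorem from \cite{dhar2022maximal}'' in the appendix, and the only argument the paper supplies is the short reduction from Theorem~\ref{thm-maxN} to the cleaner Theorem~\ref{thm-maxEst} (a rounding-and-rescaling trick to pass from $\Q_{\ge 0}$-valued $f$ to $\N$-valued $f$, followed by the divisor bound $\log\tau(N)=O(\log N/\log\log N)$ to simplify the constant). So there is no ``paper's own proof'' of Theorem~\ref{thm-maxN} to compare your proposal against.

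That said, your sketch is a reasonable outline of how the result in \cite{dhar2022maximal} is actually obtained: induction on the number of prime factors via the CRT splitting of Fact~\ref{fact:geoChine}, with the prime-power base case supplied by the Arsovski-style maximal bound and the outermost prime handled via the $\mweight$-refined variant. Your description of the constant bookkeeping is honest but incomplete; in particular you have not actually checked that the H\"older loss and the projective-space normalizations cancel to give exactly the displayed $C_{N,n}$, and the asymmetry between the $p_1$, $p_r$, and intermediate factors in the formula reflects nontrivial choices in \cite{dhar2022maximal} that your sketch only gestures at. If you want to \emph{prove} the theorem rather than just cite it, you would need to consult \cite{dhar2022maximal} directly.
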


\begin{proof}[Proof of Theorem~\ref{thm-maxEst}]
% For convenience we let $\cN^1 f=f^*$ throughout this argument.

By a scaling and limiting argument we see that it suffices to prove the theorem for $f: (\Z/N\Z)^n\rightarrow \Q_{\ge 0}$ and $\sum_{x\in (\Z/N\Z)^n} |f(x)|^n = 1$.
In this case we see that $0\le f(x)\le 1$. We divide $[0,1]$ into $N$ parts of length $1/N$. Let $g(x)$ be the function obtained by rounding the value of $f(x)$ to $\lceil N f(x)\rceil /N$. We see that $g(x)\ge f(x)$ which implies $g^*(u)\ge f^*(u)$ for all $u\in \P (\Z/N\Z)^{n-1}$. We also see that if $f(x)\ge 1/(2N)$ then $g(x)\le 2f(x)$. Let $S$ be the set of values of $x$ for which $f(x)\le 1/(2N)$. We see that $\sum_{x\in S} g(x)^n\le 1=\sum_{x \in (\Z/N\Z)^n} f(x)^n$. This gives us,
$$\sum\limits_{x \in (\Z/N\Z)^n} g(x)^n=\sum_{x\in S} g(x)^n+\sum_{x\in S^c} g(x)^n\le (2^n+1)\sum_{x \in (\Z/N\Z)^n} f(x)^n.$$

We note that $Ng$ is a function which takes values in $\{0,1,\hdots,N\}$. Hence, we can apply Theorem~\ref{thm-maxN} (note, $\mweight(Ng,p_1)\le Np\le N^2$).
\begin{align*}
(2^n+1)\sum_{x \in (\Z/N\Z)^n} f(x)^n&\ge N^{-n}\sum\limits_{x \in (\Z/N\Z)^n} (Ng(x))^n\\
&\ge D_{N,n}\underset{u \in \P (\Z/N\Z)^{n-1}}{\E}[|g^*(u)|^n]\ge D_{N,n}\underset{u \in \P (\Z/N\Z)^{n-1}}{\E}[|f^*(u)|^n],
\end{align*}
where
\begin{align*}
D_{N,n}=&\left(\frac{1}{2(2\log N+1)(2\log_{p_1}N+\log_{p_1} n+1)}\right)^n\times \\
&\left(\frac{1}{2(k_r+\lceil \log_{p_r}(n)\rceil)}  \prod\limits_{i=2}^{r-1}\frac{1}{2(k_i\log(p_i)+1)(k_i+\lceil \log_{p_i}(n)\rceil)}\right)^n.
\end{align*}

The expression $\prod_{i=1}^{r} k_i$ is upper bounded by the number of divisors $\tau(N)$ of $N$ which satisfies $\log(\tau(N))=O( \log N / \log \log N)$. The bound for $\tau(n)$ is Theorem 317 in \cite{Hardy2008AnIT}. We now see that the constant $D_{N,n}/(2^n+1)$ is lower bounded by $N^{-C n\log(n)/\log \log N}$ where $C$ is a universal constant. 
\end{proof}

\end{document}